\newtheorem{theorem}{Theorem}[section]
\newtheorem{proposition}[theorem]{Proposition}
\newtheorem{lemma}[theorem]{Lemma}
\newtheorem{cor}[theorem]{Corollary}
\theoremstyle{definition}
\newtheorem{rem}{Remark}
\newtheorem{ass}{Assumption}
\newtheorem{definition}[theorem]{Definition}
\renewcommand\a{\mathfrak{a}}
\renewcommand\b{\mathfrak{b}}
\renewcommand\c{\mathfrak{c}}
\renewcommand\u{{u}}
\renewcommand\d{\,d}
 \newcommand{\R}{\mathds{R}}
 \newcommand{\Z}{\mathds{Z}}
 \newcommand{\N}{\mathds{N}}
 \renewcommand{\P}{\mathds{P}}
 \newcommand{\E}{\mathds{E}}
\newcommand{\1}{\mathds{1}}
\newcommand\argmax{\operatornamewithlimits{argmax}}
\newcommand\Op{{\operatorname O_{\P}}}
\renewcommand\complement[1]{{#1^c}}
\newcommand{\MJ}{\color{blue}}
\title{Time-varying first-order autoregressive processes with irregular innovations}
\author{Hanna Gruber, Moritz Jirak}
\date{\today}
\begin{document}

\maketitle

\begin{abstract}
Both locally stationary processes and irregular models have had a long story of success in statistics and time series analysis. We combine both concepts and consider a time-varying, first-order autoregressive model with irregular innovations, where we assume that the coefficient function is H\"{o}lder continuous. To estimate this function, we use a quasi-maximum likelihood based approach. A precise control of this method demands a delicate analysis of extremes of certain weakly dependent processes, our main result being a concentration inequality for such quantities. Based on our analysis, upper and matching minimax lower bounds are derived, showing the optimality of our estimators. Unlike the regular case, the information theoretic complexity depends both on the smoothness and an additional shape parameter, characterizing the irregularity of the underlying distribution. The results and ideas for the proofs are very different from classical and more recent methods in connection with locally stationary processes.
\end{abstract}

{\bf Keywords:} Extreme value theory, weak dependence, local stationarity, irregular models, nonparametric autoregression.


\section{Introduction}\label{sec:introduction}

Consider a (first order) autoregressive process $X_k$, $k \in \Z$, formally given by
\begin{align}\label{intro:ar:1}
X_{k} = a X_{k-1} + \varepsilon_k, \quad k \in \Z.
\end{align}
A typical (theoretical) assumption in the literature is that the distribution of $\varepsilon_k$ is regular (e.g. Gaussian), that is, tools such as Cram\'{e}r-Rao efficiency and local asymptotic normality (LAN) are available to analyse estimators, see for instance \cite{durbin} or \cite{kreiss:aos:1987} in this context. However, in many situations, data display non regular features such as one sided support, and it is thus more appropriate to model the driving stochastic component $\varepsilon_k$ with an irregular distribution function, for instance a Gamma distribution. Such statistical models have found broad applications in dendroclimatology, hydrology, epidemiology, finance and quality control, see for instance \cite{bibinger-jirak-reiss2014}, \cite{diaz-hughes-swetnam2010}, \cite{delleur1984}, \cite{preve12030139} and \cite{smith_1994}. Particularly for an autoregressive context, we refer to \cite{BONDON:jmva:2009}, \cite{lawrance-lewis1980}, \cite{gaver-lewis1980}, \cite{PREVE2015S225}, \cite{barndorf:Jrssb:2001} and the books \cite{balakrishna:book:2021}, \cite{rosenblatt:book:2000}. In a non-parametric regression context, irregular models have also been more recently explored in \cite{hall_2009},\cite{jirak-meister-reiss2014},\cite{selk2021multivariate} and \cite{daouia_2021}. Finally, let us mention that irregular models are also of significant importance in the econometric literature, see \cite{farell1957}, \cite{aigner-lovell-schmidt1977}, \cite{meeusen-vandenbroeck1977} and \cite{park-sickles-simar1998}, \cite{park-sickles-simar2007}, \cite{kumbhakar-park-simar-tsionas2007} for some more recent accounts.

Turning more to mathematical aspects of irregular autoregressive processes, these have been studied, among others, in \cite{basawa:jtsa:2005}, \cite{davis-mccormick1989}, \cite{davis-knight-liu1992}, \cite{MR1288286}, \cite{feigin:1996:aaop}, \cite{nielsen-shephard1999}, \cite{knight2001}, \cite{ing:jasa:positive:auto:2012}, \cite{ing:jtsa:2018}. However, all these references give rise to parametric models and stationary processes, which is not always appropriate. Our aim is thus to make the next step and move to non-parametric, locally stationary autoregressive processes in the irregular realm. To be more precise, we replace $a$ in \eqref{intro:ar:1} with a function $f:[0,1]\mapsto[0,\rho]$ with $\rho\in(0,1)$, leading to
the time-varying first-order autoregressive (tvAR(1)) process given by
\begin{equation}\label{eq:AR-process}
X_k=f\left(\frac kN\right)X_{k-1}+\varepsilon_k,\quad k=1,\dots,N,
\end{equation}
with innovations $\varepsilon_k\geq 0$ (or $\varepsilon_k \leq 0$) and design points $k/N\in[0,1]$. Our aim here is to estimate $f$ subject to certain smoothness constraints, based on the observations $X_1,\ldots,X_N$. For convenience, we work with positive innovations and equidistant design points, but the results could be extended to negative innovations or inequidistant design points with analogous arguments. As it turns out, solving this task is not easy, and requires new ideas for the following reasons: First, previous techniques developed in the references above focus on weak convergence results, which is not sufficient for us, as we require (optimal) concentration results. Secondly, it is not at all clear how to adapt these methods to fit a non-parametric context, and we therefore develop our own approach.

There is, by now, a huge body of literature on various aspects of (regular) locally stationary processes, see for instance \cite{dahlhaus2012} for an overview, and \cite{dahlhaus_richter_wu_2019},\cite{richter_dahlhaus_2019},\cite{eichler_2011},\cite{kreiss_boot_2015}, \cite{Vogt_aos_2012}, for some more recent contributions. However, to the best of our knowledge, the past end present theory entirely focuses on regular models, and thus have an entirely different statistical complexity from an information theoretic perspective. As in classical, parametric statistics, irregular models are rather connected to order statistics and extreme value theory than to sample means, as is the case for regular models. In the present context, this leads to dealing with delicate problems involving extremes of weakly dependent processes. Another interesting feature of our irregular, time varying process \eqref{eq:AR-process} is that local, stationary approximations appear to be a pitfall in general. In the regular case, local stationarity is usually exploited by locally approximating the process by a stationary proxy in order to apply all the machinery available for stationary processes, see e.g. \cite{dahlhaus2012} for details. We dispense with such an approximation to avoid the resulting approximation error. In fact, this appears to be even necessary to attain the minimax rates in general, since otherwise the approximation error appears to be too large.

This work is structured as follows. In Section \ref{sec:methodology-main-results}, we present the basic setting and our main results. An outline of the proofs is given in Section \ref{sec:upper-bound} (upper bounds) and Section \ref{sec:lower-bound} (lower bounds). Detailed proofs are given in Section \ref{sec:proofs}, where some technical results are deferred to Supplement \ref{sec:supplement}.

\section{Methodology and main results}\label{sec:methodology-main-results}

We use the following notation. For sequences $a_n$, $b_n \in \R$ we write $a_n\sim b_n$ if $a_n\lesssim b_n$ and $a_n\gtrsim b_n$. Here, $a_n\lesssim b_n$ means that there exist $c\in(0,\infty)$, $n_0\in\N$, such that $a_n\le cb_n$ for all $n\ge n_0$, and $a_n\gtrsim b_n$ means that $b_n\lesssim a_n$. For a set $\mathcal{A}$, we denote with $|\mathcal{A}|$ its cardinality and $\mathcal{A}^c$ its complement. For a random variable $X$ and $p\ge1$, we denote with $\|\cdot\|_p$ the $L_p$-norm defined by $\|X\|_p^p = \E |X|^p$.  We write $\stackrel
{d}{=}$ for equality in distribution. Moreover, we use $a \wedge b = \min\{a,b\}$ and $a \vee b = \max\{a,b\}$.

Our key condition regarding the function $f$ is $f \in \mathcal{H}(L,\beta)$, that is, we assume $f$ to be a member of the Hölder class $\mathcal{H}(L,\beta)$ for $L$, $\beta>0$. This means that for the $\langle\beta\rangle$-derivatives of $f$, we have
\begin{equation}\label{eq:smoothness}
\left|f^{(\langle\beta\rangle)}(y)-f^{(\langle\beta\rangle)}(z)\right|\le L\left|y-z\right|^{\beta-\langle\beta\rangle}\hspace{5mm}\forall\;y,z\in[0,1],
\end{equation}
where $\langle\beta\rangle:=\max\{a\in\N_0:a<\beta\}$ is defined to be the largest integer less than $\beta$. Moreover, it will be convenient to extend the domain of $f$ to the whole real axis by setting
\begin{align}
f(u)=f(0) \quad \text{for all $u<0$, and} \quad f(u)=f(1) \quad \text{for all $u>1$}.
\end{align}

Regarding the irregularity of the model, we make the standard assumption that the distribution function $F_\varepsilon$ of the innovations decays with a certain sharpness, indicated by the parameter $\a$, at the endpoints of its domain. We thus assume, as $y \downarrow 0$, that
\begin{equation}\label{eq:sharpness-AR}
F_\varepsilon(y)=\c y^\a+ g(y), \quad \big|g(y)\big| \leq \c_g y^{\a+\delta},
\end{equation}
for some arbitrarily $\delta>0$ and constants $\a,\c,\c_{g},\in(0,\infty)$. Note that this implies $\P(\varepsilon \geq 0) = 1$.

Throughout this note, we work subject to the following assumptions.

\begin{ass}\label{ass:main}
There exist $L, \beta > 0$ and $\rho < 1$, such that $f \in \mathcal{H}(L,\beta)$ and $0 \leq f \leq \rho$. Moreover, $X_k$ follows the autoregressive model \eqref{eq:AR-process}, where
the innovations $\varepsilon_k$ are i.i.d. and satisfy:
\begin{itemize}
  \item[{\bf (i)}]  The local decay condition \eqref{eq:sharpness-AR} and $F_{\varepsilon} \in \mathcal{H}(L_{\varepsilon},\beta_{\varepsilon})$ for $L_{\varepsilon}, \beta_{\varepsilon} > 0$.
  \item[{\bf (ii)}] $\E \varepsilon_1^p <\infty$ for $p = \max\{1,2\a+\delta\}$, where $\delta > 0$ can be arbitrarily small.
\end{itemize}
\end{ass}

\begin{rem}
One may select the same value for $\delta$ in \eqref{eq:sharpness-AR} and Assumption \ref{ass:main}. We do so in the sequel,
and thus identify it as the same parameter.
\end{rem}

Given a fixed estimation point $x\in[0,1]$ and a bandwidth $h>0$, consider the local sample size
\begin{align}\label{defn:n}
n=n(h,x):=\left|\left\{k:\frac kN\in[x-h,x+h]\right\}\right|.
\end{align}
Thus, $n$ is proportional to $Nh$, that is $n\sim Nh$. For the bandwidth $h$ and fixed $x \in [0,1]$, our quasi-MLE $\hat f_h$ is defined as follows. Writing the approximation polynomials as
\begin{align*}
p(y)=\sum_{i=0}^{\beta + 1}b_i(y-x)^i,\quad y\in[0,1],
\end{align*}
we have to find the (local) optimal coefficients $\hat b_i$. To this end, we maximize the sum of $p$ evaluated at the design points $k/N$ within the band $[x-h,x+h]$,
\begin{equation*}
\left(\hat b_i\right)_{i=0}^{\beta}:=\argmax_{\left(b_i\right)_{i=0}^{\beta + 1}}
\sum_{\left|\frac kN-x\right|\le h}p\left(\frac kN\right),
\end{equation*}
subject to the constraints $p(k/N)\le Y_k$ for all $k$ with $|k/N-x|\le h$. Thus, we obtain a linear program, whose derivation can be motivated from the MLE if $\varepsilon_k \stackrel{d}{=} \varepsilon$ follows an exponential distribution. The estimator for $f(x)$ is given by the value at $x$ of the approximating polynomial, hence
\begin{equation*}
\hat f_h(x):=\hat b_0.
\end{equation*}

Our key result is the following concentration inequality for the estimator $\hat f_h$.

\begin{theorem}\label{thm:concentration}
Fix $x \in [0,1]$ and grant Assumption \ref{ass:main}. Then there exist finite, positive constants $c_1,c_2, c_3$ and $\nu > 0$, such that for all $0 \leq v \leq n^{\nu}$ and $h > 0$
\begin{equation*}
\sup_{f \in \mathcal{H}(\beta,L)} \P_f\Big(\big|\hat f_h(x)-f(x)\big| \geq c_1 h^{\beta} + c_2 n^{-1/\mathfrak{a}} v \Big) \leq c_3 e^{-v^{\mathfrak{a}}}.
\end{equation*}
Here, $n = n(h,x)$ is defined in \eqref{defn:n}.
\end{theorem}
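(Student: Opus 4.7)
The first step is to recast \eqref{eq:AR-process} as a one-sided regression by dividing through: set $Y_k := X_k/X_{k-1} = f(k/N) + \tilde\varepsilon_k$ with $\tilde\varepsilon_k := \varepsilon_k/X_{k-1} \ge 0$. The linear programme defining $\hat f_h$ is then the familiar local-polynomial envelope estimator of irregular regression, and I would start from the deterministic error decomposition of \cite{jirak-meister-reiss2014} (flagged in the authors' commented-out remark), which crucially applies under \emph{arbitrary} dependence within the errors. It splits $|\hat f_h(x) - f(x)|$ into a bias, controlled by how well a degree-$(\langle\beta\rangle{+}1)$ polynomial approximates $f$ on the window, plus a stochastic term governed by the local minimum $M_n := \min\{\tilde\varepsilon_k : |k/N - x| \le h\}$. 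The smoothness condition \eqref{eq:smoothness} together with $0 \le f \le \rho$ then delivers the bias contribution $c_1 h^\beta$ by standard approximation theory.

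The problem thereby reduces to proving the Weibull-type tail $\P(M_n \ge c_2 n^{-1/\a} v) \le c_3 e^{-v^\a}$ in the regime $0 \le v \le n^\nu$. If the $\tilde\varepsilon_k$ were iid, this would be immediate: since $\varepsilon_k$ is independent of $X_{k-1}$, one has $\P(\tilde\varepsilon_k \le t) = \E[F_\varepsilon(t X_{k-1})] \sim \c t^\a \E[X_{k-1}^\a]$ as $t \downarrow 0$ by the sharpness condition \eqref{eq:sharpness-AR}. The moment hypothesis Assumption \ref{ass:main}(ii) together with the geometric contraction $f \le \rho < 1$ propagated through the recursion yields $\sup_k \E[X_k^{2\a+\tilde\delta}] < \infty$, and a direct computation $\P(M_n \ge t) \le (1 - c t^\a)^n$ gives the desired tail at scale $t = n^{-1/\a} v$, with the restriction $v \le n^\nu$ arising from the range in which this asymptotic expansion remains sharp.

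The main obstacle is that the $\tilde\varepsilon_k$ are neither independent nor identically distributed: $X_{k-1}$ encodes the whole past, and successive ratios are strongly coupled. I would handle this by a Leadbetter-style blocking argument in the spirit of extreme-value theory for weakly dependent sequences. Partition the local window into alternating large blocks of length $m$ and small separator blocks of length $\ell$, chosen so that $\ell \ll m$ while $\rho^\ell$ is negligible against the target probability; the geometric contraction in \eqref{eq:AR-process} then makes distinct large blocks approximately independent. Within each large block, couple the dependent minimum to an iid surrogate by conditioning on the block-boundary value $X_{k_0}$ and transferring the sharpness estimate across the coupling via the global Hölder regularity $F_\varepsilon \in \mathcal{H}(L_\varepsilon,\beta_\varepsilon)$ from Assumption \ref{ass:main}(i). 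A Bonferroni-type union bound across blocks, with separator mass and coupling error absorbed into the constants, yields the Weibull tail $e^{-v^\a}$ uniformly over $f \in \mathcal{H}(L,\beta)$, and combining this with the bias estimate via the decomposition of step one closes the proof.
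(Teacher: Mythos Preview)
Your overall architecture matches the paper's: rewrite as one-sided regression via $Y_k = X_k/X_{k-1}$, invoke the Jirak--Meister--Rei\ss{} error decomposition, and control the stochastic part by a Leadbetter-style blocking that exploits the geometric contraction $f \le \rho < 1$. One minor correction: the stochastic term in that decomposition is not the full-window minimum $M_n$ but $\max_{j=1}^{2J(\beta)} Z_j(h,x)$, a maximum over finitely many sub-interval minima; since sub-window minima \emph{exceed} $M_n$, a bound on $M_n$ alone does not suffice. You must bound each $Z_j$ separately (each bin has size $\sim n$, so the same argument applies) and then take a finite union bound over $j$.

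The genuine gap is your within-block step. Conditioning on the block-boundary value $X_{k_0}$ does \emph{not} render the intra-block $\tilde\varepsilon_k$ independent: $\tilde\varepsilon_{k_0+2} = \varepsilon_{k_0+2}/X_{k_0+1}$ still depends on $\varepsilon_{k_0+1}$ through its denominator, so no iid surrogate is available by this route. The paper proceeds differently. First, it truncates the moving-average expansion, writing $X_{k-1} = X_{k-1}^{(1)} + X_{k-1}^{(2)}$ with $X_{k-1}^{(1)} := \sum_{i\le n^\gamma} f_{k-1,i}\,\varepsilon_{k-1-i}$ exactly $n^\gamma$-dependent and $\|X_{k-1}^{(2)}\|_1 \lesssim \rho^{n^\gamma}$; after dropping every other block of length $n^\gamma$, the retained blocks are then \emph{exactly} independent in the $X^{(1)}$-variables (Condition D$(u)$). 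Second, within each block it does not couple to an iid model at all, but applies a second-order inclusion--exclusion bound, which hinges on the pairwise estimate
\[
\P\big(\varepsilon_k \le u\, X_{k-1},\ \varepsilon_l \le u\, X_{l-1}\big) \;\lesssim\; u^{2\a}, \qquad k\neq l,
\]
i.e.\ Leadbetter's Condition D$'(u)$. This pairwise bound is the technically delicate ingredient, since the two events are coupled through the recursion, and it is precisely the step your sketch leaves unaddressed.
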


The above theorem opens the door for a number of further interesting results. Setting
\begin{align}\label{eq:h:star}
h^{\ast} = N^{-\frac{1}{\mathfrak{a} \beta + 1}} \quad \text{and} \quad \hat{f} = \hat{f}_{h^{\ast}},
\end{align}
we immediately obtain the following corollary.

\begin{cor}\label{cor:main:1}
Fix $x \in [0,1]$ and grant Assumption \ref{ass:main}. Then
\begin{align*}
\big|\hat f(x)-f(x)\big|=\mathcal{O}_{\P_f}\left(N^{-\frac{\beta}{\a\beta+1}}\right).
\end{align*}
\end{cor}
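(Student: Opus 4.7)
The plan is to derive Corollary 2.2 directly from Theorem 2.1 by choosing the bandwidth $h = h^{\ast}$ so as to balance the deterministic bias term $c_1 h^{\beta}$ against the stochastic term $c_2 n^{-1/\mathfrak{a}} v$. First I would substitute $h^{\ast} = N^{-1/(\mathfrak{a}\beta + 1)}$ into the definition \eqref{defn:n} of the local sample size, obtaining
\begin{equation*}
n(h^{\ast},x) \sim N h^{\ast} = N^{\mathfrak{a}\beta/(\mathfrak{a}\beta+1)},
\end{equation*}
so that
\begin{equation*}
(h^{\ast})^{\beta} = N^{-\beta/(\mathfrak{a}\beta+1)} \quad \text{and} \quad n(h^{\ast},x)^{-1/\mathfrak{a}} \sim N^{-\beta/(\mathfrak{a}\beta+1)}.
\end{equation*}
Hence the two terms in the concentration bound of Theorem \ref{thm:concentration} are of the same order, namely the target rate $r_N := N^{-\beta/(\mathfrak{a}\beta+1)}$.

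Second, to convert the exponential concentration inequality into the $\mathcal{O}_{\P_f}$ statement, I would fix $\eta > 0$ and choose $v_\eta > 0$ large enough that $c_3 \exp(-v_\eta^{\mathfrak{a}}) < \eta$. Since $n(h^{\ast},x) \to \infty$ as $N \to \infty$, the admissibility condition $v_\eta \leq n^{\nu}$ from Theorem \ref{thm:concentration} is satisfied for all sufficiently large $N$. Applying the theorem with this choice of $v$ then gives a finite constant $K_\eta = c_1 + c_2 v_\eta$ (independent of $N$) such that
\begin{equation*}
\P_f\bigl(|\hat f(x) - f(x)| \geq K_\eta r_N\bigr) \leq \eta
\end{equation*}
for all sufficiently large $N$, which is exactly the defining property of $|\hat f(x) - f(x)| = \mathcal{O}_{\P_f}(r_N)$.

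There is essentially no obstacle beyond this exponent bookkeeping; all of the analytic work lies in Theorem \ref{thm:concentration} itself. I note in passing that since the concentration bound of Theorem \ref{thm:concentration} holds uniformly over $f \in \mathcal{H}(L,\beta)$, the constant $K_\eta$ produced above is uniform in $f$ as well, so the conclusion could if needed be strengthened to a uniform-in-$f$ stochastic boundedness statement, although the corollary as stated only requires the pointwise version.
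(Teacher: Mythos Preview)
Your proposal is correct and is exactly what the paper intends: its own proof reads in full ``Is an immediate consequence of Theorem \ref{thm:concentration},'' and your argument simply makes that immediacy explicit by balancing $(h^{\ast})^{\beta}$ against $n(h^{\ast},x)^{-1/\mathfrak{a}}$ and then reading off the $\mathcal{O}_{\P_f}$ conclusion from the exponential tail. The only cosmetic point is that $n^{-1/\mathfrak{a}}$ equals $N^{-\beta/(\mathfrak{a}\beta+1)}$ only up to a constant (from $n \sim Nh^{\ast}$), so your $K_\eta$ should absorb that constant as well; this is harmless.
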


Recall $\nu > 0$ in Theorem \ref{thm:concentration} and let $\tau_n \to \infty$ such that $\tau_n =o\big(n^{\nu}\big)$. We then consider the truncated estimator
\begin{align}
\hat{f}^{\tau_n}(x) = \hat{f}\1(|\hat{f}| \leq \tau_n) + \tau_n \1(|\hat{f}| > \tau_n).
\end{align}

\begin{cor}\label{cor:main:2}
Fix $x \in [0,1]$ and grant Assumption \ref{ass:main}. Then, for any $q \geq 1$, there exists $C > 0$ such that
\begin{align*}
\E_f \big|\hat{f}^{\tau_n}(x)-f(x)\big|^q \leq C N^{-\frac{q\beta}{\a\beta+1}}.
\end{align*}
\end{cor}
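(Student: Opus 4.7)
The strategy is to integrate the Weibull-type bound of Theorem~\ref{thm:concentration} against the layer-cake representation of $\E_{f}|\cdot|^{q}$, using the truncation $\tau_{n}$ only to handle the regime $v > n^{\nu}$ in which Theorem~\ref{thm:concentration} provides no estimate. Setting $R_{N} := N^{-\beta/(\a\beta+1)}$ and using $n \sim N h^{\ast}$, one verifies that $(h^{\ast})^{\beta} = R_{N}$ and $n^{-1/\a} \sim R_{N}$, so Theorem~\ref{thm:concentration} specialises to
\begin{equation*}
\P_{f}\bigl(|\hat f(x) - f(x)| \geq (c_{1} + c_{2} v)\, R_{N}\bigr) \leq c_{3} e^{-v^{\a}}, \qquad 0 \leq v \leq n^{\nu}.
\end{equation*}
A case split on $\hat f \in [-\tau_{n}, \tau_{n}]$, $\hat f > \tau_{n}$, and $\hat f < -\tau_{n}$, using $0 \leq f(x) \leq \rho < \tau_{n}$ for $N$ large, gives $|\hat f^{\tau_{n}}(x) - f(x)| \leq |\hat f(x) - f(x)|$ pointwise together with the trivial upper bound $|\hat f^{\tau_{n}}(x) - f(x)| \leq \tau_{n} + \rho$.

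These facts combine, via layer cake, into
\begin{equation*}
\E_{f}|\hat f^{\tau_{n}}(x) - f(x)|^{q} \leq q\int_{0}^{\tau_{n} + \rho} t^{q-1}\, \P_{f}\bigl(|\hat f(x) - f(x)| > t\bigr)\, dt.
\end{equation*}
I would then split the integral at $T_{N} := (c_{1} + c_{2} n^{\nu}) R_{N}$. On $[0, T_{N} \wedge (\tau_{n}+\rho)]$, the substitution $t = (c_{1}+c_{2}v) R_{N}$ combined with the concentration estimate reduces the contribution to $q R_{N}^{q}\int_{0}^{n^{\nu}}(c_{1}+c_{2}v)^{q-1} c_{2}c_{3} e^{-v^{\a}}\, dv = O(R_{N}^{q})$, since the Weibull weight integrates every polynomial; the initial slab $[0, c_{1}R_{N}]$ is handled by the trivial bound $\P_{f} \leq 1$ and also yields $O(R_{N}^{q})$.

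On the remaining slab $[T_{N} \wedge (\tau_{n}+\rho),\, \tau_{n}+\rho]$ (which is empty if $\tau_{n}+\rho \leq T_{N}$) the concentration bound is no longer directly available, but monotonicity of $t \mapsto \P_{f}(|\hat f(x) - f(x)| > t)$ together with the endpoint value at $T_{N}$ yields $\P_{f}(|\hat f(x) - f(x)| > t) \leq c_{3} e^{-n^{\nu \a}}$ uniformly for $t \geq T_{N}$. The contribution is therefore at most $c_{3}(\tau_{n}+\rho)^{q} e^{-n^{\nu \a}}$, which under the hypothesis $\tau_{n} = o(n^{\nu})$ decays faster than every polynomial in $n^{-1}$ and is, in particular, $o(R_{N}^{q})$. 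Adding the two contributions yields the claimed bound with a constant depending only on $q$, $\a$, $\rho$, and the constants of Theorem~\ref{thm:concentration}.

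The main obstacle is precisely this validity window $v \leq n^{\nu}$ of Theorem~\ref{thm:concentration}. Without the truncation the layer-cake integral would have no uniform tail control beyond $T_{N}$ and could in principle be infinite; the cutoff $\tau_{n}$ does nothing other than replace this unknown tail by an explicit one, and the hypothesis $\tau_{n} = o(n^{\nu})$ is chosen exactly so that the replacement cost $(\tau_{n})^{q} e^{-n^{\nu\a}}$ is negligible against the polynomial target $R_{N}^{q}$.
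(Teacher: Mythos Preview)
Your argument is correct and follows exactly the route indicated in the paper: the layer-cake identity $\E X^{q}=q\int_{0}^{\infty}x^{q-1}\P(X>x)\,dx$ combined with the concentration bound of Theorem~\ref{thm:concentration}. The paper leaves the details as ``straightforward computations''; you have carried them out in full, including the pointwise domination $|\hat f^{\tau_{n}}(x)-f(x)|\leq|\hat f(x)-f(x)|$ and the handling of the validity window $v\leq n^{\nu}$ via the truncation level $\tau_{n}$, which is precisely the role $\tau_{n}$ is meant to play.
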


Having successfully established upper bounds, our next task is to find matching lower bounds. This is achieved by the following result.

\begin{theorem}\label{thm:main:lower-bound}
For $\mathfrak{a} \in (0,2)$, the minimax rate is $N^{-\frac\beta{\a\beta+1}}$. That is, there exists a distribution $F_{\varepsilon}$ satisfying Assumption \ref{ass:main}, such that for any $x \in [0,1]$, a lower bound on the pointwise error is given by
\begin{align*}
\lim_{N\to\infty}\inf_{\tilde f}\sup_{f\in\mathcal{H}(\beta,L)}\P_f\left(\left|\tilde f(x)-f(x)\right|>N^{-\frac\beta{\a\beta+1}}\right)>0,
\end{align*}
where the infimum is taken over all $\tilde{f}$, measurable with respect to $\sigma(X_1,\ldots,X_N)$.
\end{theorem}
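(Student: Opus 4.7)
The plan is to use Le Cam's two-point testing method. I would construct two competing regression functions $f_0, f_1 \in \mathcal{H}(\beta,L)$ that (a) separate by at least the target rate at the point $x$ and (b) induce laws on $(X_1,\ldots,X_N)$ that are not sufficiently distinguishable. Take $f_0 \equiv \rho/2$ (a constant in the admissible range) and
\begin{equation*}
f_1(u) = f_0(u) + c_0\, h^{\ast\beta}\, K\!\left(\frac{u-x}{h^\ast}\right),
\end{equation*}
where $h^\ast = N^{-1/(\a\beta+1)}$, $K$ is a fixed smooth bump supported in $[-1,1]$ with $K(0)=1$, and $c_0 > 0$ is chosen small enough that $f_1 \in \mathcal{H}(\beta,L)$ and still $0 \le f_1 \le \rho$. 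Then $|f_1(x)-f_0(x)| = c_0\, h^{\ast\beta} \asymp N^{-\beta/(\a\beta+1)}$, exactly twice the rate we need to exclude.

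By Le Cam's lemma, it suffices to show that the total variation distance between $\P_{f_0}$ and $\P_{f_1}$ on $\sigma(X_1,\ldots,X_N)$ is bounded away from one; equivalently, that the squared Hellinger distance $H^2(\P_{f_0},\P_{f_1})$ stays bounded. Using the chain-rule (tensorisation) for Hellinger distance over the Markov structure of \eqref{eq:AR-process}, one reduces to
\begin{equation*}
H^2(\P_{f_0},\P_{f_1}) \leq \sum_{k:\,|k/N-x|\le h^\ast} \E_{f_0}\!\left[\, h^2\bigl(p_{\eps}(\cdot),\, p_{\eps}(\cdot - \eta_k)\bigr)\right],
\end{equation*}
where $\eta_k := (f_1(k/N)-f_0(k/N))\,X_{k-1}$ and $p_\eps$ is the innovation density. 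Two ingredients are then needed: (i) the irregular Hellinger shift bound
\begin{equation*}
h^2\bigl(p_\eps(\cdot),\, p_\eps(\cdot-\eta)\bigr) \lesssim |\eta|^{\a}, \qquad \a\in(0,2),
\end{equation*}
which follows from \eqref{eq:sharpness-AR} by splitting $\int\bigl(\sqrt{p_\eps(y)}-\sqrt{p_\eps(y-\eta)}\bigr)^2 dy$ into the singular zone $[0,2|\eta|]$ and the smooth tail; and (ii) the moment bound $\E_{f_0} X_{k-1}^{\a} < \infty$ supplied by Assumption \ref{ass:main}(ii). Combining these with $|f_1-f_0|\lesssim h^{\ast\beta}$ on the summation window and local sample size $\sim Nh^\ast$ yields
\begin{equation*}
H^2(\P_{f_0},\P_{f_1}) \lesssim Nh^\ast \cdot h^{\ast\a\beta} = N\, h^{\ast(\a\beta+1)} = 1,
\end{equation*}
precisely calibrated by the choice of $h^\ast$.

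The main obstacle is the first ingredient. Establishing the irregular shift estimate $h^2 \lesssim |\eta|^\a$ uniformly in small $\eta$ is delicate because $p_\eps(y) \sim \a\c y^{\a-1}$ blows up at $0$ when $\a<1$ and has infinite Fisher information whenever $\a<2$, so standard smooth-shift arguments fail. The remainder term $g$ in \eqref{eq:sharpness-AR}, the H\"older regularity $F_\eps \in \mathcal{H}(L_\eps,\beta_\eps)$, and a careful split of the integration domain around the moving boundaries $\{y=0\}$ and $\{y=\eta\}$ together deliver the bound; the restriction $\a \in (0,2)$ in the theorem reflects exactly the threshold at which $h^2$ transitions from $|\eta|^\a$ to $|\eta|^2$ scaling, which in turn is what makes the minimax rate irregular. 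Given this shift inequality, the remaining steps — Hellinger tensorisation, the moment computation for $X_{k-1}$, and the final appeal to Le Cam's two-point inequality — are standard.
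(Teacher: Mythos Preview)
Your two-point strategy and the identification of the irregular shift scale $h^2(p_\eps,p_\eps(\cdot-\eta))\asymp|\eta|^\a$ for $\a\in(0,2)$ are both sound, and the calibration $Nh^\ast\cdot h^{\ast\a\beta}=1$ is exactly right. However, the step you treat as routine --- ``the chain-rule (tensorisation) for Hellinger distance over the Markov structure'' --- is where the real difficulty lies, and your displayed inequality
\[
H^2(\P_{f_0},\P_{f_1})\le\sum_{k}\E_{f_0}\bigl[h^2\bigl(p_\eps(\cdot),p_\eps(\cdot-\eta_k)\bigr)\bigr]
\]
is not a standard result. Hellinger affinity tensorises over \emph{product} measures, but for Markov chains with different transition kernels there is no such chain rule; the recursive integration produces a sub-Markov kernel whose marginals do not coincide with either $\P_{f_0}$ or $\P_{f_1}$. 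One can rescue the scheme by passing through the KL divergence, which \emph{does} obey a chain rule, and then invoking $H^2\le KL$. But the per-step estimate $KL(p_\eps\|p_\eps(\cdot+\eta))\lesssim\eta^\a$ for Gamma innovations is itself delicate: for $\a\in(1,2)$ the leading $O(\eta)$ terms cancel and one must extract the next-order $\eta^\a$ contribution, a computation you have not carried out. The paper in fact singles out the dependence as the hard part (``establishing the lower bound turned out to be surprisingly demanding'').

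The paper avoids tensorisation entirely by a different choice of hypotheses: it takes $f_1\equiv 0$, so that under $H_1$ the observations $X_k=\eps_k$ are \emph{i.i.d.}\ Gamma, and sets $f_0=(c_f n^\ast)^{-1/\a}$ on the window. This makes the likelihood ratio $d\P_0/d\P_1$ explicit and, crucially, allows all moment computations to be done under the i.i.d.\ measure $\P_1$. Writing $d\P_0/d\P_1=\exp\bigl(\sum_k U_k\bigr)\prod_k\mathds{1}_{\{X_k>fX_{k-1}\}}$, the paper shows directly that $\sum_k U_k\to 0$ and the indicator product $\to 1$ in $\P_1$-probability (Proposition~\ref{prop:lower-bound-moments-U-1} supplies $|\E_{\P_1}U_k|,\,\E_{\P_1}U_k^2\lesssim f^\a$, playing the role of your shift bound). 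The Neyman--Pearson bound \eqref{eq:lower-bound:neyman:pearson:bound} then follows. Thus the paper's route is more direct: the specialisation $f_1=0$ trades your generic bump construction for a drastically simpler dependence structure under the reference measure, which is what makes the likelihood-ratio analysis tractable.
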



From the viewpoint of irregular models, the case $\mathfrak{a} \geq 2$ turns out to be rather uninteresting, as it can be transferred to the regular setup. This is a bit surprising on first sight, since this is different from the regression case discussed for instance in \cite{jirak-meister-reiss2014}, \cite{hall_2009}.
Indeed, for $\a\ge2$, the process can simply be regularised by subtracting the mean, that is, by considering
\begin{align}\label{regularise}
X_k-\bar X_{N h^{\ast}}\approx f\left(X_{k-1}-\E X_{k-1}\right)+\varepsilon_k-\E \varepsilon_k,
\end{align}
leading to the regular minimax rate $N^{-\beta/(2\beta+1)}$. More precisely, the approximation error in \eqref{regularise} can be shown to be of magnitude $\sim N^{-\beta/(2\beta+1)}$ at the most, and one may then appeal to the regular theory mentioned above. Strictly speaking, one also has to adapt the proof for the lower bound in this case, we omit the details. Observe that one can always use \eqref{regularise}, but this leads to a suboptimal estimate for $\mathfrak{a} \in (0,2)$, even if $\E X_k$ is known.\\
\\
Having established an optimal concentration inequality as in Theorem \ref{thm:concentration} also opens the door for future research. For instance, one may think about the adaptive case where $\beta$ and $\mathfrak{a}$ are both unknown as in the i.i.d. regression case discussed in~\cite{jirak-meister-reiss2014}. Among other things, this requires estimation of $\mathfrak{a}$ based on the observations $X_k$, which is currently investigated.\\
\\
Let us now turn to the problem of prediction. As is well-known, (locally) autoregressive models are extremely useful for prediction of future values and a substantial theory has evolved around this matter, see for instance~\cite{brockwell_davis_2016} for classical results. In our setup, prediction also requires a careful handling of the bias, since, as briefly discussed above, the standard empirical mean estimator $\bar X_{N h^{\ast}}$ only achieves the rate $(N h^{\ast})^{-1/2}$, which is not sufficient for our cause. However, this problem can be circumvented by plugin-estimation. To this end, let

\begin{align*}
\hat{\varepsilon}_k &= X_k - \hat{f}^{\tau_n}\big(k/N\big)X_{k-1}, \quad 2 \leq k \leq N,\\
\hat{X}_{N+1} &= X_N \hat{f}^{\tau_n}\big(1\big) + \frac{1}{N-1}\sum_{k = 2}^N \hat{\varepsilon}_k.
\end{align*}

Our final result below establishes the desired optimality of our predictor $\hat{X}_{N+1}$.

\begin{theorem}\label{thm:main:prediction}
Grant Assumption \ref{ass:main}. Then there exists a finite constant $C > 0$, such that
\begin{align*}
\Big|\E_f \big|X_{N+1} - \hat{X}_{N+1} \big|^2 - \operatorname{Var}_f\big(\varepsilon_{N+1}\big) \Big| \leq C \Big(N^{-\frac{2\beta}{\a\beta+1}} \vee N^{-1} \Big).
\end{align*}
This bound is minimax optimal up to multiplicative constants.
\end{theorem}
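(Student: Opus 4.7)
The plan is to decompose the prediction error into a ``noise'' part that is essentially independent of the estimator and a ``plug-in'' part controlled by Corollary \ref{cor:main:2}. Substituting $\hat\varepsilon_k = \varepsilon_k + (f(k/N) - \hat f^{\tau_n}(k/N)) X_{k-1}$ into the definition of $\hat X_{N+1}$ and using $X_{N+1} = f(1) X_N + \varepsilon_{N+1}$, one obtains $X_{N+1} - \hat X_{N+1} = U + V$, where
\begin{align*}
V &= \varepsilon_{N+1} - \bar\varepsilon, \qquad \bar\varepsilon = \tfrac{1}{N-1}\sum_{k=2}^N \varepsilon_k,\\
U &= \big(f(1) - \hat f^{\tau_n}(1)\big) X_N - \tfrac{1}{N-1}\sum_{k=2}^N \big(f(k/N) - \hat f^{\tau_n}(k/N)\big) X_{k-1}.
\end{align*}

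For the upper bound, I would first compute $\E V^2$ exactly using independence of $\varepsilon_{N+1}$ from $\varepsilon_2,\dots,\varepsilon_N$, obtaining $\E V^2 = \operatorname{Var}_f(\varepsilon_{N+1}) + \sigma^2/(N-1)$, which already accounts for the $N^{-1}$ contribution. Next, $\|U\|_2 = O(N^{-\beta/(\a\beta+1)})$ follows from H\"older's inequality applied termwise: Corollary \ref{cor:main:2} supplies every polynomial moment of $|f(k/N) - \hat f^{\tau_n}(k/N)|$ at the optimal rate, while $\sup_k \|X_k\|_{2q} < \infty$ for $q \leq \a + \tilde\delta/2$ follows from the contractivity $f \leq \rho < 1$ combined with Assumption \ref{ass:main}(ii). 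The cross term is the most delicate piece: the $\varepsilon_{N+1}$ contribution factors as $\mu\,\E[U]$ by independence and exactly cancels the analogous $\mu\,\E[U]$ inside $\E[U\bar\varepsilon]$, leaving $\E[UV] = -\E[U(\bar\varepsilon - \mu)]$. Cauchy--Schwarz then yields $|\E[UV]| \leq \|U\|_2\|\bar\varepsilon - \mu\|_2 = O(N^{-\beta/(\a\beta+1) - 1/2})$, and a case distinction on whether $\beta \leq 1/(2-\a)$ or $\beta > 1/(2-\a)$ confirms that this is always dominated by $N^{-2\beta/(\a\beta+1)} \vee N^{-1}$.

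For the matching lower bound, since $\varepsilon_{N+1}$ is independent of $\mathcal{F}_N = \sigma(X_1,\dots,X_N)$, conditioning yields
\begin{align*}
\E_f(X_{N+1} - \tilde X_{N+1})^2 = \operatorname{Var}_f(\varepsilon_{N+1}) + \E_f\big(f(1) X_N + \mu - \tilde X_{N+1}\big)^2
\end{align*}
for every predictor $\tilde X_{N+1}$ measurable with respect to $\mathcal{F}_N$. It then suffices to lower-bound the residual term by $N^{-2\beta/(\a\beta+1)}$ and by $N^{-1}$ on two separate least-favorable subfamilies. For the first, a Le Cam two-point reduction between $f_0,f_1\in\mathcal{H}(\beta,L)$ that coincide outside a shrinking neighborhood of $1$ and satisfy $|f_0(1)-f_1(1)| \sim N^{-\beta/(\a\beta+1)}$ reduces the prediction problem to a test whose indistinguishability is exactly what underlies Theorem \ref{thm:main:lower-bound}; since $X_N^2$ is uniformly bounded away from $0$ on a set of positive probability, the estimation lower bound transfers to the prediction lower bound. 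For the second, fix $f \equiv f_0$ constant so that $X_k = f_0 X_{k-1} + \varepsilon_k$ is stationary AR(1); the remaining sub-problem is parametric estimation of the scalar $\mu = \E\varepsilon_1$ from iid innovations, where a classical $\sigma^2/N$ lower bound via van Trees or Cram\'er--Rao applies.

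The main obstacle will be moment control in the plug-in step when $\a$ is close to $1$: only $\|X_k\|_{2\a+\tilde\delta}$ is guaranteed, so the naive H\"older split $\|f - \hat f^{\tau_n}\|_4 \|X_k\|_4$ fails. I would overcome this by taking the H\"older exponent large for $f - \hat f^{\tau_n}$ and just above $1$ for $X_k$, exploiting that Corollary \ref{cor:main:2} gives the same rate for every polynomial moment; a supplementary truncation on the high-probability concentration event from Theorem \ref{thm:concentration}, combined with the polynomial growth $\tau_n = o(n^\nu)$, disposes of the tail contribution on the exceptional set.
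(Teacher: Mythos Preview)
Your approach is essentially the paper's: the same decomposition into a pure-noise piece (your $V$) and a plug-in piece (your $U$) bounded via Corollary~\ref{cor:main:2}, and the same two-case lower bound, first reducing to Theorem~\ref{thm:main:lower-bound} via the fact that $X_N$ is bounded away from zero with positive probability, then to a parametric $N^{-1/2}$ problem for the innovation mean (the paper phrases this as estimating the Gamma rate $\b$ after setting $\rho=0$, which is the same thing). Your explicit treatment of the cross term $\E[UV]$ is more careful than the paper's ``piecing everything together,'' and the observation that $N^{-\beta/(\a\beta+1)-1/2}$ is the geometric mean of the two target rates and hence dominated by their maximum is clean.

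On the moment obstacle you flag: the paper in fact uses exactly the split you call naive, applying Cauchy--Schwarz with $\E|X_{k-1}|^4$ without comment, so your concern is well placed. Your H\"older refinement (large exponent on $\hat f^{\tau_n}-f$, exponent just above $1$ on $X_{k-1}$) is the right repair and reduces the requirement to $\|X_{k-1}\|_{2+\epsilon}<\infty$; note, however, that for $\a<1$ Assumption~\ref{ass:main}(ii) does not literally supply even this, so both arguments tacitly assume slightly more integrability than stated---which is in any case needed for $\operatorname{Var}_f(\varepsilon_{N+1})$ to be finite.
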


The origin of the additional term $N^{-1}$ stems from the necessity to estimate the unknown mean $\E \varepsilon_k$.

\section{Outline for the upper bound}\label{sec:upper-bound}

{\bf From now on, we assume the validity of Assumption \ref{ass:main} without mentioning it any further.}\\
\\
Unfortunately, it seems to be impossible to apply the methods of \cite{nielsen-shephard1999}, \cite{knight2001}, \cite{davis-mccormick1989}, \cite{davis-knight-liu1992} and \cite{MR1288286} in our case, in particular, since we require concentration inequalities, and the latter all rely on weak convergence type arguments. Thus, our goal is to use a positive-error version of the quasi-maximum likelihood procedure developed in~\cite{jirak-meister-reiss2014} for nonparametric regression models, see Section \ref{subsec:preparation} for more details. To this end, we divide the defining equation \eqref{eq:AR-process} of the process by $X_{k-1}$. Thereby, we obtain
\begin{equation}\label{eq:AR-as-regression}
Y_k:=\frac{X_k}{X_{k-1}}=f\left(\frac kN\right)+\frac{\varepsilon_k}{X_{k-1}}=:f\left(\frac kN\right)+\tilde\varepsilon_k,
\end{equation}
the standard model of nonparametric regression with $f$ as the regression function and errors $\tilde\varepsilon_k$. Compared to~\cite{jirak-meister-reiss2014}, a huge difference constitutes the fact that the sequence $(\tilde\varepsilon_k)$ exhibits (weak) dependence, posing substantial challenges both for the upper and lower bound in the present context.

First, we need to ensure that an analogue of the sharpness condition \eqref{eq:sharpness-AR} holds for the modified innovations $\tilde\varepsilon_k$ from \eqref{eq:AR-as-regression}. This may be surprising on first sight, but the intuition here is that $X_k \gg 0$ with high probability due to its autoregressive structure, hence, the distribution $F_{\tilde\varepsilon_k}$ of $\tilde{\varepsilon}_k$ 'should' behave as $F_{\varepsilon}$.
\begin{proposition}\label{thm:err-distrib}
The distribution functions $F_{\tilde\varepsilon_k}$ of the modified innovations $\tilde\varepsilon_k$ satisfy
\begin{equation*}
F_{\tilde\varepsilon_k}(y)=\c_ky^\a+\mathcal{O}\left(y^{\a+\delta'}\right)\hspace{5mm}\text{as }y\downarrow0
\end{equation*}
for some $\delta'\in(0,\delta]$, where $c^{-1} \leq \c_k \leq c $ for some $c \in (0,\infty)$.
\end{proposition}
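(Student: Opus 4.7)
The plan is to exploit the independence of $\varepsilon_k$ from $X_{k-1}$, combined with the sharpness expansion \eqref{eq:sharpness-AR} and moment bounds on $X_{k-1}$. Since $X_{k-1}$ is a measurable function of $\varepsilon_1,\ldots,\varepsilon_{k-1}$ and $X_{k-1}\geq \varepsilon_{k-1}>0$ almost surely (the latter because \eqref{eq:sharpness-AR} implies $F_\varepsilon(0)=0$), conditioning on $X_{k-1}$ yields
\begin{equation*}
F_{\tilde\varepsilon_k}(y) \;=\; \E\bigl[F_\varepsilon(yX_{k-1})\bigr] \;=\; \mathfrak{c}\,\E\bigl[X_{k-1}^{\mathfrak{a}}\bigr]\,y^{\mathfrak{a}} \;+\; \E\bigl[g(yX_{k-1})\bigr].
\end{equation*}
This identifies $\mathfrak{c}_k := \mathfrak{c}\,\E[X_{k-1}^{\mathfrak{a}}]$ as the natural leading coefficient and reduces the proposition to controlling $\mathfrak{c}_k$ and the remainder.

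Next I would show that $\mathfrak{c}_k$ is uniformly bounded above and below. Iterating $X_k\leq \rho X_{k-1}+\varepsilon_k$ produces the geometric majorant $X_k\leq \sum_{j=0}^{k-1}\rho^{j}\varepsilon_{k-j}+\rho^{k}X_0$, to which Minkowski's inequality (if $\mathfrak{a}\geq 1$) or subadditivity of $x\mapsto x^{\mathfrak{a}}$ (if $\mathfrak{a}<1$) applies, giving a bound in terms of $\E\varepsilon_1^{\mathfrak{a}}$ that is finite by Assumption \ref{ass:main}(ii). The same computation at exponent $p=\mathfrak{a}+\delta'$ for any $\delta'\leq \mathfrak{a}+\tilde\delta$ simultaneously yields $\sup_k \E X_{k-1}^{p}<\infty$, a moment bound needed below. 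The lower bound on $\mathfrak{c}_k$ is immediate from $X_{k-1}\geq \varepsilon_{k-1}$, giving $\mathfrak{c}_k\geq \mathfrak{c}\,\E\varepsilon_1^{\mathfrak{a}}>0$.

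The main obstacle is controlling the remainder $R_k(y) := \E[g(yX_{k-1})]$, since the bound $|g(z)|\leq \mathfrak{c}_g z^{\mathfrak{a}+\delta}$ from \eqref{eq:sharpness-AR} is valid only in a neighbourhood of zero, while $X_{k-1}$ has unbounded support. My plan is to fix a threshold $y_0>0$ within the validity range of the local bound and split at $\{yX_{k-1}\leq y_0\}$ and its complement. On the first event the local bound gives
\begin{equation*}
\E\bigl[|g(yX_{k-1})|\1_{\{yX_{k-1}\leq y_0\}}\bigr]\leq \mathfrak{c}_g\, y^{\mathfrak{a}+\delta}\,\E X_{k-1}^{\mathfrak{a}+\delta}.
\end{equation*}
On the complement I would use the crude estimate $|g(z)|\leq 1+\mathfrak{c}z^{\mathfrak{a}}$ together with the pointwise Markov bounds $\1_{\{X_{k-1}>y_0/y\}}\leq (y/y_0)^{p}X_{k-1}^{p}$ and $X_{k-1}^{\mathfrak{a}}\1_{\{X_{k-1}>y_0/y\}}\leq (y/y_0)^{\delta'}X_{k-1}^{p}$ at exponent $p=\mathfrak{a}+\delta'$. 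Each contribution is then of order $y^{\mathfrak{a}+\delta'}\sup_k\E X_{k-1}^{p}$, and choosing $\delta'\in(0,\delta]$ with $\delta'\leq \mathfrak{a}+\tilde\delta$ makes all the required moments finite via the previous step. Combining the three estimates gives $R_k(y)=\mathcal{O}(y^{\mathfrak{a}+\delta'})$ uniformly in $k$, which is exactly the desired conclusion.
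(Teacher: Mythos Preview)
Your proof is correct and follows essentially the same strategy as the paper: condition on $X_{k-1}$ using independence, split according to whether $yX_{k-1}$ is small or large, apply the local expansion \eqref{eq:sharpness-AR} on the small part, and control the large part via Markov's inequality with the available moments of $X_{k-1}$. Your organization is in fact slightly cleaner than the paper's --- you identify $\mathfrak{c}_k=\mathfrak{c}\,\E X_{k-1}^{\mathfrak{a}}$ directly by writing $F_\varepsilon=\mathfrak{c}z^{\mathfrak{a}}+g(z)$ globally, whereas the paper introduces a moving cutoff $x_N=y_N^{\chi-1}$ with an auxiliary parameter $\chi$; one small caveat is that your first-event bound as written requires $\E X_{k-1}^{\mathfrak{a}+\delta}<\infty$, which Assumption~\ref{ass:main}(ii) need not guarantee for the given $\delta$, so you should use $z^{\mathfrak{a}+\delta}\leq y_0^{\delta-\delta'}z^{\mathfrak{a}+\delta'}$ on $\{z\leq y_0\}$ to reduce to the moment of order $\mathfrak{a}+\delta'$ there as well.
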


In the following, to simplify the notation, we drop the index $f$ for all probability measures $\P_f$ and corresponding expectations $\E_f$.

\subsection{Error decomposition}\label{subsec:preparation}

To determine the upper bound, the error decomposition into a deterministic and a stochastic part established in \cite{jirak-meister-reiss2014} is essential. It is given in Theorem 3.1 in \cite{jirak-meister-reiss2014} and states that for all $f\in\mathcal{H}(L,\beta)$ - adapted to our situation - we have
\begin{equation}\label{eq:error-decomposition}
\left|\hat f_h(x)-f(x)\right|\le c(\beta,L)h^{\beta}+c(\beta)\max_{j=1}^{2J(\beta)}\left\{Z_j(h,x):x+h\mathcal{I}_j\subseteq[0,1]\right\}
\end{equation}
for constants $c(\beta,L)$, $c(\beta)>0$ and $J(\beta)\in\N$ only depending on their respective arguments. Here,
\begin{equation*}
Z_j(h,x)=\min_{k=1}^N\left\{\tilde\varepsilon_k:\frac kN\in x+h\mathcal{I}_j\right\},
\end{equation*}
where
\begin{equation*}
\mathcal{I}_j=[-1+(j-1)/J(\beta),-1+j/J(\beta)].
\end{equation*}
Thus, each $Z_j(h,x)$ represents the minimum of the modified errors $\tilde\varepsilon_k$ on the respective bin $x+h\mathcal{I}_j$. Up to a factor $c(\beta)$, the stochastic part of the error boundary consists of the maximum of these bin minima $Z_j(h,x)$. In view of \eqref{eq:error-decomposition}, we aim to show
\begin{equation*}
\P\left(\max_{j=1}^{2J(\beta)}\left\{Z_j(h,x):x+h\mathcal{I}_j\subseteq[0,1]\right\}\ge  n^{-1/\a}v \right)\lesssim e^{-\c^{(1)}v^\a}
\end{equation*}
for some constant $\c^{(1)}>0$. As $J(\beta)<\infty$, it suffices to show
\begin{equation}\label{eq:concentration}
\P\left(Z_j(h,x)\ge  n^{-1/\a}v \right)\lesssim e^{-\c^{(1)}v^\a}, \quad j=1,\dots,2J(\beta),
\end{equation}
due to the union bound. Let
\begin{align}\label{n:j}
n_j = \big|\big\{k : \, k/N \in x+h\mathcal{I}_j \big\} \big|.
\end{align}

Then clearly $n_j \sim n$, and, by the above, our goal is thus to establish
\begin{equation*}
\P\left(\min_{k=1}^{n_j}\tilde\varepsilon_k\ge n^{-1/\a}v\right)\lesssim e^{-\c^{(1)}v^\a}
\end{equation*}
for $n\in\N$ large enough (we will also require a constraint on $v\in(0,\infty)$, recall Theorem \ref{thm:concentration}). To ease the notation, for
\begin{equation}\label{eq:u-shortcut}
u=u(n,v,\a)=n^{-1/\a}v,
\end{equation}
we shall mostly write $\u$ in what follows.

In order to employ a blocking argument from Leadbetter \cite{leadbetter1974},\cite{MR691492} (which will be explained in detail in subsection \ref{subsec:Leadbetter}), we write
\begin{equation}\label{eq:n-is-2Mcp(n)}
n=2Mn^\gamma
\end{equation}
for some $\gamma\in(0,1)$ and $M\in\N$, and also assume $n^\gamma$ to be an integer for simplicity.

We start by dividing $X_k$, $k\in\{1,\dots,n\}$, into two parts,
\begin{eqnarray}
X_k&=&\sum_{i=0}^\infty\left(\prod_{l=0}^{i-1}f\left(\frac{k-l}n\right)\right)\varepsilon_{k-i}=:\sum_{i=0}^\infty f_{k,i}\varepsilon_{k-i}\label{eq:f_(k,i)-Def} \nonumber \\
&=&\sum_{i=0}^{n^\gamma}f_{k,i}\varepsilon_{k-i}+\sum_{i>n^\gamma}f_{k,i}\varepsilon_{k-i} =:X_k^{(1)}+X_k^{(2)},\label{eq:X1X2}
\end{eqnarray}
where we recall that the process is defined on $\mathbb{Z}$. Due to the independence of the innovations $\varepsilon_k$, the $X_k^{(1)}$ are $n^\gamma$-dependent. For $X_k^{(2)}$, we have the following trivial result we repeatedly make use of and therefore state for the sake of reference.
\begin{lemma}\label{lem:Norm-X2}
For $X_k^{(2)}$ defined in \eqref{eq:X1X2}, we have (uniformly in $k$)
\begin{equation*}
\left\|X_k^{(2)}\right\|_1\lesssim\rho^{\left(n^\gamma\right)}
\end{equation*}
for $\rho < 1$ as in Assumption \ref{ass:main}. Moreover, we have (uniformly in $k$) $\left\|X_k\right\|_p < \infty$.
\end{lemma}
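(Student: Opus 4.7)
The plan is to simply bound the tail of the geometric-type series defining $X_k^{(2)}$ using the uniform bound $f\le\rho$ together with the triangle inequality for the $L_1$-norm and the finiteness of $\E\varepsilon_1$ from Assumption \ref{ass:main}(ii).

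First I would observe that by the definition \eqref{eq:f_(k,i)-Def}, $f_{k,i}=\prod_{l=0}^{i-1}f((k-l)/N)$, and since $0\le f\le\rho$ pointwise, the product satisfies the deterministic bound $0\le f_{k,i}\le\rho^{i}$. Because the innovations are nonnegative (so in particular $|\varepsilon_{k-i}|=\varepsilon_{k-i}$) and iid with $\|\varepsilon_1\|_1=\E\varepsilon_1<\infty$ (guaranteed by Assumption \ref{ass:main}(ii), which yields $\E\varepsilon_1^p<\infty$ for $p=1$), I can apply the triangle inequality termwise to the series defining $X_k^{(2)}$.

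This gives
\begin{equation*}
\bigl\|X_k^{(2)}\bigr\|_1\le\sum_{i>n^{\gamma}}f_{k,i}\,\|\varepsilon_{k-i}\|_1\le \E\varepsilon_1\sum_{i>n^{\gamma}}\rho^{i}=\E\varepsilon_1\cdot\frac{\rho^{n^{\gamma}+1}}{1-\rho},
\end{equation*}
where the geometric sum is finite precisely because $\rho<1$. Absorbing the finite constants $\E\varepsilon_1$ and $\rho/(1-\rho)$ into the implicit constant of $\lesssim$ yields the claim $\|X_k^{(2)}\|_1\lesssim\rho^{n^{\gamma}}$.

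There is no serious obstacle here; the statement is essentially a bookkeeping consequence of $f\le\rho<1$ and $\E\varepsilon_1<\infty$. The only point worth double-checking is that the infinite series defining $X_k$ in \eqref{eq:f_(k,i)-Def} is well-defined in $L_1$, which is ensured by exactly the same geometric estimate applied to the full sum over $i\ge 0$; this also justifies the splitting \eqref{eq:X1X2}.
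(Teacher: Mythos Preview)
Your proof is correct and essentially identical to the paper's: both use $f_{k,i}\le\rho^i$ from $f\le\rho$, apply the triangle inequality with $\|\varepsilon_{k-i}\|_1=\|\varepsilon_1\|_1<\infty$, and sum the geometric tail to obtain $\rho^{n^\gamma+1}/(1-\rho)\lesssim\rho^{n^\gamma}$. Your additional remark on the $L_1$-well-definedness of the full series is a harmless extra justification not spelled out in the paper.
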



\subsection{Leadbetter's blocking argument}\label{subsec:Leadbetter}

As previously mentioned, in order to determine the rate of the stochastic part of the error boundary in \eqref{eq:error-decomposition}, we make use of a blocking argument introduced by Leadbetter \cite{leadbetter1974}. We cut the index set $\{1,\dots,n\}$ into $2M$ blocks of equal length $n^\gamma$. Since \eqref{eq:error-decomposition} only provides an upper bound, we are not interested in the exact distribution of the right-hand side. So we simply drop every other block, leading us to
\begin{eqnarray}
\mathcal{K}&:=&\{1,\dots,n^\gamma\}\cup\{2n^\gamma+1,\dots,3n^\gamma\}\cup\dots\cup\nonumber\\
&&\{(2M-2)n^\gamma+1,\dots,(2M-1)n^\gamma\}\nonumber\\
&=:&\mathcal{K}_1\cup\dots\cup\mathcal{K}_M\label{eq:blocking}
\end{eqnarray}
as a new index set. Handling the minimum on $\mathcal{K}$ suffices since $\mathcal{K}$ is a subset of $\{1,\dots,n\}$ and thus the minimum on $\mathcal{K}$ cannot be smaller than the minimum on $\{1,\dots,n\}$. The individual blocks $\mathcal{K}_m$ are separated from each other by $n^\gamma$ to exploit the $n^\gamma$-dependence of the $X_{k-1}^{(1)}$. This is in line by what Leadbetter dubbed Condition D$(u)$.
\begin{lemma}\label{prop:D}
\textbf{\fontshape{n}\selectfont{(Condition D$(u)$)}} For the minimum of the innovations $\tilde\varepsilon_k$ on the band $[x-h,x+h]$ holds
\begin{eqnarray*}
\P\left(\min_{k=1}^n\tilde\varepsilon_k\ge\u\right)&\le&\prod_{m=1}^M\P\left(\min_{k\in\mathcal{K}_m}
\frac{\varepsilon_k}{X_{k-1}^{(1)}}\ge\u\right)+R_1,
\end{eqnarray*}
where, for $\eta_n \in (0,\infty)$, we have the bound
\begin{align*}
R_1\lesssim\eta_n^{-1}n\rho^{\left(n^\gamma\right)}+ n \big(u \eta_n\big)^{\beta_{\varepsilon}}.
\end{align*}
\end{lemma}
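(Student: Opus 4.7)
The plan has three stages: reduce from $\{1,\dots,n\}$ to the sparse index set $\mathcal{K}$, replace $X_{k-1}$ by the truncated sum $X_{k-1}^{(1)}$, and decouple adjacent blocks via a one-sided correction, booking the resulting Hölder/Markov cost into $R_1$. Both initial reductions are exact: by monotonicity, $\P(\min_{k=1}^n\tilde\varepsilon_k\ge u)\le\P(\bigcap_{k\in\mathcal{K}}\{\varepsilon_k\ge uX_{k-1}\})$, and since $X_{k-1}^{(2)}\ge 0$ we have $X_{k-1}\ge X_{k-1}^{(1)}$ pointwise, which sharpens to
\begin{equation*}
\P\Bigl(\min_{k=1}^n\tilde\varepsilon_k\ge u\Bigr)\le\P\Bigl(\bigcap_{k\in\mathcal{K}}\{\varepsilon_k\ge uX_{k-1}^{(1)}\}\Bigr).
\end{equation*}

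For the decoupling, a short index calculation reveals that $\{X_{k-1}^{(1)}:k\in\mathcal{K}_{m+1}\}$ shares with block $\mathcal{K}_m$ exactly one innovation, $\varepsilon_{(2m-1)n^\gamma}$, which is the largest index of $\mathcal{K}_m$ and enters $X_{2mn^\gamma}^{(1)}$ (for the smallest $k\in\mathcal{K}_{m+1}$) through the single summand with $i=n^\gamma$, of coefficient bounded by $\rho^{n^\gamma}$. Remove this entanglement by setting, at each such boundary $k$,
\begin{equation*}
\widetilde X_{k-1}^{(1)}:=X_{k-1}^{(1)}-f_{k-1,n^\gamma}\,\varepsilon_{(2m-1)n^\gamma},
\end{equation*}
and $\widetilde X_{k-1}^{(1)}:=X_{k-1}^{(1)}$ elsewhere. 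The families $\{(\varepsilon_k,\widetilde X_{k-1}^{(1)})\}_{k\in\mathcal{K}_m}$, $m=1,\dots,M$, now depend on pairwise disjoint innovations and are independent, and since $\widetilde X_{k-1}^{(1)}\le X_{k-1}^{(1)}$,
\begin{equation*}
\P\Bigl(\bigcap_{k\in\mathcal{K}}\{\varepsilon_k\ge uX_{k-1}^{(1)}\}\Bigr)\le\prod_{m=1}^M\P\Bigl(\bigcap_{k\in\mathcal{K}_m}\{\varepsilon_k\ge u\widetilde X_{k-1}^{(1)}\}\Bigr).
\end{equation*}

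Finally, revert each factor to the desired $X^{(1)}$ form and telescope. Per block the discrepancy is supported on the single boundary $k$, where $\{u\widetilde X_{k-1}^{(1)}\le\varepsilon_k<uX_{k-1}^{(1)}\}$ sits in an interval of length at most $u\rho^{n^\gamma}\varepsilon_{(2m-1)n^\gamma}$; splitting on $\{\rho^{n^\gamma}\varepsilon_{(2m-1)n^\gamma}\le\eta_n\}$, Hölder regularity of $F_\varepsilon$ bounds the good part by $L_\varepsilon(u\eta_n)^{\beta_\varepsilon}$, while Markov's inequality combined with $\E\varepsilon<\infty$ bounds the bad part by $\lesssim\eta_n^{-1}\rho^{n^\gamma}$. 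The elementary telescoping inequality $\prod_m a_m-\prod_m b_m\le\sum_m(a_m-b_m)$ valid for $0\le b_m\le a_m\le 1$ then upgrades this to a total error of at most $M\bigl[L_\varepsilon(u\eta_n)^{\beta_\varepsilon}+C\eta_n^{-1}\rho^{n^\gamma}\bigr]\le n\bigl[L_\varepsilon(u\eta_n)^{\beta_\varepsilon}+C\eta_n^{-1}\rho^{n^\gamma}\bigr]$, yielding $R_1$. The main obstacle is the decoupling: pinpointing the single-index overlap across adjacent blocks and choosing a one-sided correction $\widetilde X^{(1)}\le X^{(1)}$ that simultaneously enforces exact cross-block independence and keeps the Hölder/Markov discrepancy at the advertised order.
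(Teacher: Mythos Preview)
Your argument is correct but proceeds along a genuinely different path from the paper. The paper introduces the truncation event $A_{\eta_n}=\{\max_k X_{k-1}^{(2)}<\eta_n\}$ and chains through $\P(C_n)\to\P(C_{\eta_n,n})\to\P(C_{\eta_n,n}^{(1)})\to\P(C_n^{(1)})$; the two outer steps each cost $\lesssim\eta_n^{-1}n\rho^{n^\gamma}$ via Markov on $A_{\eta_n}^c$, and the middle step costs $\lesssim n(u\eta_n)^{\beta_\varepsilon}$ by conditioning on $X_{k-1}^{(1)}$ and using $F_\varepsilon\in\mathcal{H}(L_\varepsilon,\beta_\varepsilon)$ over all $n$ indices. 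The product form then comes from asserting block independence of the $X_{k-1}^{(1)}$ directly. You instead exploit the one-sidedness $X_{k-1}^{(2)}\ge 0$ to pass from $X_{k-1}$ to $X_{k-1}^{(1)}$ \emph{for free} by monotonicity, then make a second monotone reduction $X^{(1)}\to\widetilde X^{(1)}$ removing the single overlapping innovation at each block boundary, which manufactures exact cross-block independence; your $R_1$ arises only in reverting $\widetilde X^{(1)}$ back to $X^{(1)}$ inside each factor, and the H\"older/Markov cost runs over just $M-1$ boundary indices rather than all $n$. Your route is more economical and, as a bonus, explicitly handles the one-index overlap between adjacent blocks that the paper's factorization step takes for granted. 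The paper's two-sided chaining would, on the other hand, transfer unchanged to settings where $X^{(2)}$ is not sign-definite and your monotonicity shortcuts are unavailable.
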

Roughly speaking, Condition D$(u)$ means that we can bound the distribution of the minimum on the whole index set $\{1,\dots,n\}$ by a product of the minima on the blocks $\mathcal{K}_m$, $m=1,\dots,M$, where for the block minima, the $X_k^{(2)}$ part has been cut from $X_k$ to ensure $n^\gamma$-dependence between the blocks.\\
We still need to bound the minima on the blocks $\mathcal{K}_m$, $m\in\{1,\dots,M\}$. For this, we use the inclusion-exclusion principle, obtaining
\begin{eqnarray}
\P\left(\min_{k\in\mathcal{K}_m}\frac{\varepsilon_k}{X_{k-1}^{(1)}}\ge\u\right)
&\le&1-\sum_{k\in\mathcal{K}_m}\P\left(\varepsilon_k<\u X_{k-1}^{(1)}\right)+\nonumber\\
&&\sum_{\substack{k,l\in\mathcal{K}_m:\\k<l}}\P\left(\varepsilon_k<\u X_{k-1}^{(1)},\varepsilon_l<\u X_{l-1}^{(1)}\right),\label{eq:Aufteilung-Teilmengenminimum}
\end{eqnarray}
confer the proof of Proposition \ref{prop:Consequence-D'}. To be able to bound the last term in \eqref{eq:Aufteilung-Teilmengenminimum}, we need to verify Leadbetter's second condition, Condition D$'(u)$. For our (nonstationary) situation, Condition D$'(u)$ reads as follows.
\begin{lemma}\label{prop:D'}
\textbf{\fontshape{n}\selectfont{(Condition D$'(u)$)}} For all $m\in\{1,\dots,M\}$ holds
\begin{equation*}
\sum_{\substack{k,l\in\mathcal{K}_m:\\k<l}}\P\left(\varepsilon_k<\u X_{k-1}^{(1)},\varepsilon_l<\u X_{l-1}^{(1)}\right)\lesssim\left(\frac{v^\a}M\right)^2.
\end{equation*}
\end{lemma}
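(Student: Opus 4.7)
My plan is to prove the pointwise bound $\P\big(\varepsilon_k<\u X_{k-1}^{(1)},\,\varepsilon_l<\u X_{l-1}^{(1)}\big)\lesssim \u^{2\a}$ uniformly over pairs $k<l$ in $\mathcal{K}_m$, after which summation over the $\lesssim n^{2\gamma}=|\mathcal{K}_m|^2$ pairs produces $n^{2\gamma}\u^{2\a}=n^{2\gamma-2}v^{2\a}\sim (v^{\a}/M)^2$, using $n=2Mn^\gamma$ and $\u=n^{-1/\a}v$.

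Fix $k<l$ in $\mathcal{K}_m$ and write $A_k=\{\varepsilon_k<\u X_{k-1}^{(1)}\}$. Since $\varepsilon_l$ is independent of the triple $(\varepsilon_k,X_{k-1}^{(1)},X_{l-1}^{(1)})$ (all use innovations with index $\le l-1$), I first condition $\varepsilon_l$ out to obtain
\[
\P(A_k\cap A_l)=\E\big[F_\varepsilon(\u X_{l-1}^{(1)})\1_{A_k}\big]\lesssim\u^{\a}\,\E\big[(X_{l-1}^{(1)})^{\a}\1_{A_k}\big],
\]
where I invoke the global bound $F_\varepsilon(y)\lesssim y^\a$ for all $y\ge 0$, which follows from \eqref{eq:sharpness-AR} combined with $F_\varepsilon\le 1$. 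It therefore suffices to establish $\E[(X_{l-1}^{(1)})^{\a}\1_{A_k}]\lesssim \u^\a$.

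To this end, I split $X_{l-1}^{(1)}$ according to the index of the contributing innovations:
\[
X_{l-1}^{(1)}=f_{l-1,l-1-k}\varepsilon_k+U'+V,
\]
where $U'$ aggregates the terms indexed by $\{k+1,\ldots,l-1\}$ and $V$ aggregates those indexed by $\{l-1-n^\gamma,\ldots,k-1\}$. All three summands are nonnegative, so $(X_{l-1}^{(1)})^\a\lesssim f_{l-1,l-1-k}^\a\varepsilon_k^\a+(U')^\a+V^\a$ by the $C_r$-inequality. The first contribution, after conditioning on $X_{k-1}^{(1)}$ and using $\int_0^z y^\a\,dF_\varepsilon(y)\lesssim z^{2\a}$, is bounded by $\rho^{(l-1-k)\a}\u^{2\a}\,\E(X_{k-1}^{(1)})^{2\a}$, which is $o(\u^{\a})$ and draws essentially on the $2\a$-moment hypothesis in Assumption \ref{ass:main}(ii). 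The second contribution factorises because $U'$ is built from innovations disjoint from $(\varepsilon_k,X_{k-1}^{(1)})$, yielding $\E(U')^\a\,\P(A_k)\lesssim \u^\a$.

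The main obstacle is the third term, since the innovations entering $V$ overlap with those of $X_{k-1}^{(1)}$, precluding direct independence. The decisive observation is that for every index $j$ in the range of $V$, the ratio of AR coefficients satisfies $f_{l-1,l-1-j}/f_{k-1,k-1-j}=\prod_{i=k}^{l-1}f(i/N)\le\rho^{l-k}$, so that the pointwise estimate $V\le\rho^{l-k}X_{k-1}^{(1)}$ holds. Consequently,
\[
\E[V^\a\1_{A_k}]\le\rho^{(l-k)\a}\,\E\big[(X_{k-1}^{(1)})^\a F_\varepsilon(\u X_{k-1}^{(1)})\big]\lesssim\rho^{(l-k)\a}\,\u^\a\,\E(X_{k-1}^{(1)})^{2\a}.
\]
Combining the three contributions gives $\E[(X_{l-1}^{(1)})^{\a}\1_{A_k}]\lesssim \u^{\a}$, hence $\P(A_k\cap A_l)\lesssim\u^{2\a}$ uniformly, and summing over the at most $n^{2\gamma}$ ordered pairs in $\mathcal{K}_m$ delivers the claimed bound.
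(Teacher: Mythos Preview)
Your proof is correct but follows a genuinely different route from the paper's. The paper first uses the monotonicity $X_{k-1}^{(1)}\le X_{k-1}$ to pass to the full process and then invokes a separately-proven key lemma (Lemma~\ref{lem:neighboring}), whose proof introduces an independent copy $X_{k+j-1}'$ of $X_{k+j-1}$ and splits the resulting double integral at deterministic cutoffs $x_n=u^{\xi-1}$ and $z_n=u^{\zeta-1}$, controlling the tail pieces via Markov's inequality and the moment hypothesis. You instead stay with the truncated variables $X^{(1)}$ throughout, replace the local expansion of $F_\varepsilon$ by the global estimate $F_\varepsilon(y)\lesssim y^\a$ (valid since $F_\varepsilon\le 1$ handles large arguments), and decompose $X_{l-1}^{(1)}$ by innovation index, the crucial step being the pointwise domination $V\le\rho^{l-k}X_{k-1}^{(1)}$ for the overlapping part. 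Your approach is more direct: it dispenses with the independent-copy device and the two layers of integral splitting, at the cost of not isolating a reusable lemma for the untruncated process. Both arguments ultimately rest on the $2\a$-moment bound from Assumption~\ref{ass:main}\,(ii) in the same way, and both yield the same uniform $u^{2\a}$ bound per pair.
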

The following result is a consequence of Lemma \ref{prop:D'} and constitutes a key step towards proving our concentration inequality, which will be done in Subsection \ref{subsec:upper-bound-overall}.
\begin{proposition}\label{prop:Consequence-D'}
For all $m\in\{1,\dots,M\}$ holds
\begin{eqnarray*}
\P\left(\min_{k\in\mathcal{K}_m}\frac{\varepsilon_k}{X_{k-1}^{(1)}}\ge\u\right)&\le&1-\frac{\c^{(3)}v^\a}M+R_2,
\end{eqnarray*}
where $\c^{(3)}\in(0,\infty)$ and for $\eta_n \in (0,\infty)$
\begin{align*}
R_2\lesssim\frac{n^\frac{-\delta}{\a}v^{\a+\delta}+ n^2 \big(u \eta_n\big)^{\beta_{\varepsilon}} + \eta_n^{-1}n^2\rho^{\left(n^\gamma\right)}}M+\left(\frac{v^\a}M\right)^2.
\end{align*}
\end{proposition}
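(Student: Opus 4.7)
The plan is to start from the inclusion--exclusion bound already displayed in \eqref{eq:Aufteilung-Teilmengenminimum}. The double sum on the right--hand side is handled directly by Lemma \ref{prop:D'}, contributing the $(v^{\a}/M)^{2}$ summand in $R_{2}$. Hence the remaining task is to show that the single sum satisfies
\[
\sum_{k\in\mathcal{K}_{m}}\P\bigl(\varepsilon_{k}<\u X_{k-1}^{(1)}\bigr)\;\geq\;\frac{\c^{(3)}v^{\a}}{M}\;-\;\text{(error terms)},
\]
with errors matching the other three summands inside the numerator of $R_{2}$.

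For the main term, I would first replace $X_{k-1}^{(1)}$ by the full $X_{k-1}=X_{k-1}^{(1)}+X_{k-1}^{(2)}$, so that each probability becomes $F_{\tilde\varepsilon_{k}}(\u)$, to which Proposition \ref{thm:err-distrib} applies. With $\u=n^{-1/\a}v$ this gives
\[
F_{\tilde\varepsilon_{k}}(\u)\;=\;\c_{k}\,n^{-1}v^{\a}\;+\;\mathcal{O}\bigl(n^{-1-\delta'/\a}v^{\a+\delta'}\bigr).
\]
Summing over $|\mathcal{K}_{m}|=n^{\gamma}\sim n/(2M)$ indices and using the uniform lower bound $\c_{k}\geq c^{-1}$ from Proposition \ref{thm:err-distrib} produces the main contribution $\c^{(3)}v^{\a}/M$ and leaves a residual of order $n^{-\delta'/\a}v^{\a+\delta'}/M$, matching the first term in the numerator of $R_{2}$ up to relabelling $\delta'\in(0,\delta]$.

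The main obstacle is controlling the error introduced by the replacement $X_{k-1}\to X_{k-1}^{(1)}$, which \emph{decreases} each probability since $X_{k-1}^{(2)}\geq 0$, so it really does need a two--sided bound. To quantify the gap, I would exploit the independence of $\varepsilon_{k}$ from $(X_{k-1}^{(1)},X_{k-1}^{(2)})$ to rewrite
\[
F_{\tilde\varepsilon_{k}}(\u)-\P\bigl(\varepsilon_{k}<\u X_{k-1}^{(1)}\bigr)\;=\;\E\bigl[F_{\varepsilon}(\u X_{k-1})-F_{\varepsilon}(\u X_{k-1}^{(1)})\bigr],
\]
and then truncate at a level $\eta_{n}>0$. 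On the event $\{X_{k-1}^{(2)}\leq \eta_{n}\}$ the Hölder condition $F_{\varepsilon}\in\mathcal{H}(L_{\varepsilon},\beta_{\varepsilon})$ gives $|F_{\varepsilon}(\u X_{k-1})-F_{\varepsilon}(\u X_{k-1}^{(1)})|\lesssim (\u\eta_{n})^{\beta_{\varepsilon}}$, whereas on its complement Markov's inequality combined with Lemma \ref{lem:Norm-X2} yields $\P(X_{k-1}^{(2)}>\eta_{n})\leq \eta_{n}^{-1}\|X_{k-1}^{(2)}\|_{1}\lesssim \eta_{n}^{-1}\rho^{(n^{\gamma})}$. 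Summing these two contributions over $k\in\mathcal{K}_{m}$ and crudely overestimating the cardinality produces the remaining terms $n^{2}(\u\eta_{n})^{\beta_{\varepsilon}}/M$ and $\eta_{n}^{-1}n^{2}\rho^{(n^{\gamma})}/M$ inside $R_{2}$. Assembling the main term, the expansion residual, the two truncation errors, and the $(v^{\a}/M)^{2}$ coming from Lemma \ref{prop:D'} completes the proof.
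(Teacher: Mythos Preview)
Your proposal is correct and follows essentially the same route as the paper: inclusion--exclusion, Lemma~\ref{prop:D'} for the double sum, replacement of $X_{k-1}^{(1)}$ by $X_{k-1}$ to invoke Proposition~\ref{thm:err-distrib} for the main term, and a truncation at level $\eta_n$ on $X_{k-1}^{(2)}$ combined with the H\"older regularity of $F_\varepsilon$ and Lemma~\ref{lem:Norm-X2} to control the replacement error. The only cosmetic difference is that you write the gap as the conditional expectation $\E[F_\varepsilon(\u X_{k-1})-F_\varepsilon(\u X_{k-1}^{(1)})]$ and truncate each $X_{k-1}^{(2)}$ individually, whereas the paper phrases it via the probability $\P(\u X_{k-1}^{(1)}<\varepsilon_k\le \u X_{k-1})$ split on the global event $A_{\eta_n}=\{\max_k X_{k-1}^{(2)}<\eta_n\}$ and refers back to the calculation after~\eqref{eq:for-each-summand-for-later}; the resulting bounds are identical.
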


\subsection{Proof of Theorem \ref{thm:concentration} and Corollaries \ref{cor:main:1} and \ref{cor:main:2}}\label{subsec:upper-bound-overall}

From Lemma \ref{prop:D} and Proposition \ref{prop:Consequence-D'}, we can now derive an upper bound for the stochastic part in the error decomposition \eqref{eq:error-decomposition}.
\begin{theorem}\label{thm:main-stoch-result}
Suppose that
\begin{equation}\label{eq:finite-sample-assumption}
v = o\big(n^\frac{1}{1+\a}\big).
\end{equation}
Then the stochastic part of the error decomposition \eqref{eq:error-decomposition} satisfies
\begin{equation*}
\P\left(\max_{j=1}^{2J(\beta)}\left\{Z_j(h,x):x+h\mathcal{I}_j\subseteq[0,1]\right\}\ge n^{-1/\a}v\right)\lesssim e^{-\c^{(1)}v^\a},
\end{equation*}
where $\c^{(1)}\in(0,\infty)$.
\end{theorem}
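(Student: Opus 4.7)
The plan is to combine the error decomposition \eqref{eq:error-decomposition} with the dependence-thinning Lemma \ref{prop:D} and the single-block bound of Proposition \ref{prop:Consequence-D'}, and then calibrate the blocking parameter $\gamma$ in $n = 2Mn^\gamma$ together with the auxiliary level $\eta_n$ so that every remainder is absorbed by the leading exponential $e^{-c^{(1)} v^\a}$. Since $J(\beta) < \infty$ depends only on $\beta$, a union bound over $j$ first reduces the claim to showing
\[
\P\bigl(Z_j(h,x) \geq \u\bigr) \;\lesssim\; e^{-c^{(1)} v^\a}
\]
for each fixed $j$. Each $Z_j(h,x) = \min\{\tilde\varepsilon_k : k/N \in x + h\mathcal{I}_j\}$ is the minimum of $n_j \sim n$ modified innovations on a single bin, so after relabelling it fits the template of Lemma \ref{prop:D}.

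Applying Lemma \ref{prop:D} followed by Proposition \ref{prop:Consequence-D'} on each of the $M$ blocks, together with the elementary inequality $1 - x \leq e^{-x}$, yields
\[
\P\bigl(Z_j(h,x) \geq \u\bigr) \;\leq\; \Bigl(1 - \tfrac{c^{(3)} v^\a}{M} + R_2\Bigr)^M + R_1 \;\leq\; \exp\bigl(-c^{(3)} v^\a + M R_2\bigr) + R_1,
\]
which is legitimate once the single-factor bound lies in $[0, 1]$, a point that the calibration below will verify. It therefore suffices to check $M R_2 \leq \tfrac{1}{2} c^{(3)} v^\a$ and $R_1 \lesssim e^{-c^{(1)} v^\a}$ uniformly over $v \in [0, n^\nu]$ for a suitable $\nu > 0$.

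The calibration proceeds as follows. Set $\gamma = 1/(1+\a)$, so that $M \sim n^{\a/(1+\a)}$, and choose $\eta_n$ to balance the two contributions in $R_1$; explicitly, $\eta_n \asymp \rho^{n^\gamma/(1+\beta_\varepsilon)} \u^{-\beta_\varepsilon/(1+\beta_\varepsilon)}$ drives both the $\eta_n$-dependent contributions to $R_1$ and to $M R_2$ to be super-polynomially small in $n$. Of the two remaining contributions to $MR_2$, the term $v^{2\a}/M$ is $o(v^\a)$ precisely because the hypothesis $v = o(n^{1/(1+\a)})$ gives $v^\a = o(M)$, and $n^{-\delta/\a} v^{\a+\delta} = v^\a \cdot (v/n^{1/\a})^\delta$ is likewise $o(v^\a)$ since $1/(1+\a) < 1/\a$. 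On the $R_1$ side, the super-polynomial decay already guarantees $R_1 \lesssim e^{-c^{(1)} v^\a}$ provided $\nu > 0$ is chosen strictly below $\gamma/\a = 1/(\a(1+\a))$, so that $v^\a \leq n^{\a\nu} \ll n^\gamma$ on the admissible range.

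The main obstacle is precisely this joint calibration. The hypothesis \eqref{eq:finite-sample-assumption} saturates the requirement $v^\a \ll M$, so any attempt to take $\gamma$ larger, in the hope of a tighter control of $R_1$, would break the $v^{2\a}/M$ bound; conversely the super-polynomial decay needed to dominate $R_1$ imposes an additional cap on $\nu$ through $\a\nu < \gamma$. Together these two forces pin down the admissible range of $v$ and, implicitly, the exponent $\nu$ appearing in Theorem \ref{thm:concentration}.
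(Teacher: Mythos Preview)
Your proposal is correct and follows the same route as the paper: combine Lemma~\ref{prop:D} with Proposition~\ref{prop:Consequence-D'}, calibrate $\gamma=1/(1+\a)$ so that $M\sim n^{\a/(1+\a)}$ and the hypothesis $v=o(n^{1/(1+\a)})$ exactly kills the $v^{2\a}/M$ term, and pick $\eta_n$ of order $\rho^{c\,n^\gamma}$ so that both $R_1$ and the $\eta_n$-dependent parts of $MR_2$ decay super-polynomially. The only cosmetic difference is your balancing choice $\eta_n\asymp\rho^{n^\gamma/(1+\beta_\varepsilon)}u^{-\beta_\varepsilon/(1+\beta_\varepsilon)}$ versus the paper's $\eta_n=\rho^{\beta_\varepsilon n^\gamma/(1+\beta_\varepsilon)}$, which differ only by a polynomial factor and hence lead to the same conclusion; your explicit remark that the $R_1$ bound further caps $\nu$ below $\gamma/\a$ is a welcome clarification that the paper leaves implicit.
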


Theorem \ref{thm:concentration} and Corollaries \ref{cor:main:1} and \ref{cor:main:2} are now simple consequences of Theorem \ref{thm:main-stoch-result}. For the sake of completeness, we explicitly state this below.

\begin{proof}[Proof of Theorem \ref{thm:concentration}]
Follows from Theorem \ref{thm:main-stoch-result} and the error decomposition \eqref{eq:error-decomposition}.
\end{proof}

\begin{proof}[Proof of Corollary \ref{cor:main:1}]
Is an immediate consequence of Theorem \ref{thm:concentration}.
\end{proof}

\begin{proof}[Proof of Corollary \ref{cor:main:2}]
Using the fact that for any $X \geq 0$, $q \geq 1$, we have
\begin{align*}
\E X^q = q\int_0^{\infty} x^{q-1}\P(X > x)dx,
\end{align*}
this follows from Theorem \ref{thm:concentration} and straightforward computations.
\end{proof}

\begin{proof}[Proof of Theorem \ref{thm:main:prediction}]
We first establish the upper bound. By Cauchy-Schwarz and Corollary \ref{cor:main:2}, we have
\begin{align*}
\E^2 \big|X_{k-1}\big(\hat{f}^{\tau_n}(k/N)-f(k/N)\big)\big|^2 &\leq \E \big|X_{k-1}\big|^4 \E \big|\hat{f}^{\tau_n}(k/N)-f(k/N)\big|^4 \\&\lesssim N^{-\frac{4\beta}{\a\beta+1}},
\end{align*}
which also implies
\begin{align*}
\E^2 \big|\varepsilon_k - \hat{\varepsilon}_k \big|^2 \lesssim N^{-\frac{4\beta}{\a\beta+1}}.
\end{align*}
Hence by the triangle inequality
\begin{align*}
\E^{1/2} \Big|\sum_{k = 1}^N (\varepsilon_k - \hat{\varepsilon}_k) \Big|^2 \lesssim \sqrt{N} N^{-\frac{\beta}{\a\beta+1}}.
\end{align*}
Piecing everything together and exploiting the fact that $\varepsilon_{N+1}$ is independent of $\varepsilon_N, \varepsilon_{N-1}, \ldots$, we obtain via the triangle inequality
\begin{align*}
\Big|\E \big|X_{N+1} - \hat{X}_{N+1} \big|^2 - \operatorname{Var}\big(\varepsilon_{N+1}\big) \Big| &\lesssim N^{-2}\operatorname{Var}\Big( \sum_{k = 2}^N \varepsilon_k \Big) +  N^{-\frac{2\beta}{\a\beta+1}} \\&\lesssim N^{-\frac{2\beta}{\a\beta+1}} \vee N^{-1},
\end{align*}
which completes the proof for the upper bound. Let us now turn to the lower bound. We assume first that $2\beta < \a \beta +1$.
It is well known that the optimal predictor is the conditional expectation $\E\big[X_{N+1}\big|X_{N},X_{N-1},\ldots \big] = X_{N} f\big(1\big) + \E \varepsilon_N$. Suppose now there exists $\tilde{X}_{N+1} \in \sigma(X_N,X_{N-1},\ldots)$, such that
\begin{align*}
\Big|\E \big|X_{N+1} - \tilde{X}_{N+1} \big|^2 - \operatorname{Var}\big(\varepsilon_{N+1}\big) \Big| \leq a_N , \quad a_N = o\big(N^{-\frac{2\beta}{\a\beta+1}}\big).
\end{align*}
Then, for any $c > 0$, we have $\P\big(\big|X_{N} f(1)- \tilde{X}_{N+1} \big| > c N^{-\frac{2\beta}{\a\beta+1}}\big) \to 0$ as $N$ increases. Since $\lim_{c'\to 0}\P\big(|X_N| > c'\big) = 1$, uniformly in $N$, this contradicts Theorem \ref{thm:main:lower-bound}, and thus establishes optimality if $2\beta < \a \beta +1$. It remains to treat case $2\beta \geq \a \beta +1$. To this end, we consider the special case where $\rho = 0$, $\a > 0$ is known, and $\varepsilon_k$ follows a Gamma distribution $\Gamma(\a,\b)$, where the rate $\b$ is unknown. Since in this case $\E \varepsilon_k = \a/\b$, minimax optimal prediction boils down to minimax optimal estimation of $\b$. However, standard arguments show that the minimax rate here is $N^{-1/2}$, which completes the proof.
\end{proof}

\section{Outline for the lower bound}\label{sec:lower-bound}

Due to the simultaneous irregularity and dependence of the underlying sequence, establishing the lower bound turned out to be surprisingly demanding.
We use the common Neyman-Pearson strategy, where it suffices to show that for some constant $c > 0$, we have
\begin{align}\label{eq:lower-bound:neyman:pearson:bound}
\sup_{x}\frac{x}{1+x}\P_1\Big(\frac{d\P_0}{d\P_1} \geq x \Big) \geq c
\end{align}
for appropriate hypothesis $H_0$, $H_1$ and corresponding probability measures $\P_1$, $\P_0$, see Theorem 2.1 in Tsybakov \cite{tsybakov2009} for details. Our hypothesis
will be constructed based on $f_0,f_1\in\mathcal{H}(\beta,L)$, satisfying
\begin{equation}\label{eq:lower-bound-separated-hypotheses}
\left|f_0-f_1\right|\gtrsim N^{-\beta/(\a\beta+1)}.
\end{equation}

We now prove that our rate $N^{-\beta/(\a\beta+1)}$ in Corollary \ref{cor:main:1} (and hence also Corollary \ref{cor:main:2}) is minimax optimal for $\a\in(0,2)$. By the above, we need to construct $H_0$, $H_1$. For our first hypothesis, we assume $f_0>0$, while we take $f_1=0$ for the second. From now on, we thus write $f:=f_0$ for simplicity and insert $0$ for $f_1$ where appropriate. This leads to the two models
\begin{align*}
H_0:X_k=f\big(k/N\big)X_{k-1}+\varepsilon_k\quad\text{ and }\quad H_1:X_k=\varepsilon_k.
\end{align*}
The $\varepsilon_k$'s distribution can be chosen in a specific manner, since we aim at bounding the minimax risk (or minimax error probability) from below. We take $\varepsilon_k\sim\Gamma(\a,\b)$ for some $\b\in(0,\infty)$, and thus
\begin{equation}\label{eq:lower-bound-eps-density}
f_\varepsilon(x)=\frac{\b^\a}{\Gamma(\a)}x^{\a-1}e^{-\b x}.
\end{equation}
Further, we write
\begin{equation}\label{def:lower-bound-hat-n}
n^{\ast} :=N h^{\ast} =NN^{-1/(\a\beta+1)}=N^{(\a\beta)/(\a\beta+1)}
\end{equation}
for the local sample size corresponding to the bandwidth $h^{\ast}$ in \eqref{eq:h:star}. In the following, $\P_j=\P_{f_j},j\in\{0,1\},$ denotes the joint distribution of $(X_1,\dots,X_{N})$ under $H_j$. To simplify the notation and calculations, we set $f(x) = f\mathds{1}_{\{x \leq n^{\ast}/N\}}$ for
\begin{equation}\label{eq:lower-bound-rate}
f=(c_f n^{\ast})^{-1/\a}, \quad c_f > 0.
\end{equation}
Clearly, $f(x)$ is not even continuous, but this (and $f \in \mathcal{H}(\beta,L)$) can be salvaged by a usual Kernel modification without effecting the rates. As pointed out above, we therefore stick to the current construction.

We start our lower bound proof by computing the Radon-Nikodym derivative.

\begin{proposition}\label{prop:lower-bound-Radon-Nikodym}
We have $\P_0\ll\P_1$. Moreover, the Radon-Nikodym derivative $d\P_0/d\P_1$ is given by
\begin{align*}
&\frac{d\P_0}{d\P_1}\left(x_1,\dots,x_{N}\right) = \frac{d\P_0}{d\P_1}\left(x_1,\dots,x_{n^{\ast}}\right)\\
=&e^{(\a-1)\sum_{k=1}^{n^{\ast}}\log\left(1-f\frac{x_{k-1}}{x_k}\mathds{1}_{\{x_k>fx_{k-1}\}}\right)+\b\sum_{k=1}^{n^{\ast}}fx_{k-1}}\prod_{k=1}^{n^{\ast}}\mathds{1}_{\{x_k>fx_{k-1}\}}
\end{align*}
for $x_i \in(0,\infty)$.
\end{proposition}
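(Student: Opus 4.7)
The plan is to carry out a direct likelihood computation exploiting the Markov structure of $(X_k)$ under $H_0$. I interpret the displayed formula as the conditional Radon--Nikodym derivative given $X_0 = x_0$, since the variable $x_0$ appears on the right-hand side through the $k=1$ summand; the statement of the proposition is implicitly conditional on the initial value.

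First I would observe that, by the construction $f(k/N) = f\mathds{1}_{\{k \leq n^{\ast}\}}$, one has $X_k = \varepsilon_k$ for all $k > n^{\ast}$ under $H_0$, exactly as under $H_1$. Therefore the conditional law of $(X_{n^{\ast}+1},\dots,X_N)$ given $(X_1,\dots,X_{n^{\ast}})$ is the same in both models, which immediately reduces the computation to the first $n^{\ast}$ coordinates and accounts for the first equality in the displayed identity.

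Next I would write out the two conditional joint densities, conditionally on $X_0 = x_0$. Under $H_1$, $(X_1,\dots,X_{n^{\ast}})$ are iid $\Gamma(\a,\b)$, with joint density $\prod_{k=1}^{n^{\ast}} \frac{\b^\a}{\Gamma(\a)} x_k^{\a-1} e^{-\b x_k} \mathds{1}_{\{x_k > 0\}}$. Under $H_0$, the Markov property combined with $\varepsilon_k = X_k - f x_{k-1}$ (a triangular change of variables with unit Jacobian) gives the conditional density of $X_k$ given $X_{k-1} = x_{k-1}$ as $\frac{\b^\a}{\Gamma(\a)}(x_k - f x_{k-1})^{\a-1} e^{-\b(x_k - f x_{k-1})} \mathds{1}_{\{x_k > f x_{k-1}\}}$, and the joint density is the product of these. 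The support of $\P_0(\cdot\mid X_0 = x_0)$ is $\{x_k > f x_{k-1},\ k=1,\dots,n^{\ast}\} \subset (0,\infty)^{n^{\ast}}$, where the $H_1$-density is strictly positive; hence $\P_0 \ll \P_1$ and the RN derivative is the pointwise ratio.

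Finally I would simplify the ratio. The prefactors $\b^\a/\Gamma(\a)$ cancel across $k$; the ratio of exponentials contributes $\exp\bigl(\b \sum_{k=1}^{n^{\ast}} f x_{k-1}\bigr)$; and the ratio of powers contributes $\prod_{k=1}^{n^{\ast}} (1 - f x_{k-1}/x_k)^{\a-1}$ on the support, which equals $\exp\bigl((\a-1) \sum_{k=1}^{n^{\ast}} \log(1 - f x_{k-1}/x_k)\bigr)$. The indicator $\mathds{1}_{\{x_k > f x_{k-1}\}}$ inserted inside the logarithm in the claimed formula plays only a cosmetic role: off the support of $\P_0$ the outer product $\prod_k \mathds{1}_{\{x_k > f x_{k-1}\}}$ already vanishes, so whatever value the logarithm takes there is irrelevant, and inserting the indicator ensures the argument is positive wherever the formula contributes nontrivially. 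Collecting the terms yields the displayed expression. There is no real obstacle here; this is essentially bookkeeping with Gamma densities, with the only subtlety being careful tracking of the supports and the implicit conditioning on $X_0$.
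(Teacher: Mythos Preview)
Your proposal is correct and follows essentially the same route as the paper: write the joint densities under $H_0$ and $H_1$ via the Markov/product structure, observe that the models agree on coordinates $k>n^{\ast}$, check support inclusion for absolute continuity, and simplify the ratio of Gamma densities. Your explicit remark that the formula is implicitly conditional on $X_0=x_0$ (since $x_0$ appears in the $k=1$ term) is a point the paper glosses over, but otherwise the arguments coincide.
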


Next, we introduce a truncation, and obtain that for any $\tau > 0$, we have
\begin{align}\label{eq:lower-bound:truncation}
\P_1\Big(\frac{d\P_0}{d\P_1} \geq x \Big) \geq \P_1\Big(\frac{d\P_0}{d\P_1}\big(x_1,\dots,x_{n^{\ast}}\big)\mathds{1}_{\{\max_{0 \leq k \leq n^{\ast}-1}x_k \leq \tau\}} \geq x \Big).
\end{align}

Setting
\begin{align}
U_k&:=(\a-1)\log\left(1-f\frac{X_{k-1}}{X_k}\right)\mathds{1}_{\{X_k>fX_{k-1}, X_{k-1} \leq \tau\}}+\b fX_{k-1} \mathds{1}_{\{X_{k-1} \leq \tau\}},\label{eq:lower-bound-def-Uk}
\end{align}
the (truncated) Radon-Nikodym derivative can be written as
\begin{align}\label{eq:lower-bound-RD-derivative-random-variables}
\frac{d\P_0}{d\P_1}\left(X_1,\dots,X_{n^{\ast}}\right)\mathds{1}_{\max_{0 \leq k \leq n^{\ast}-1} X_k \leq \tau} &=e^{\sum_{k=1}^{n^{\ast}}U_k}\prod_{k=1}^{n^{\ast}}\mathds{1}_{\{X_k>fX_{k-1}, X_{k-1} \leq \tau\}}.
\end{align}

The following result establishes the correct order of the first two moments of $U_k$, which is one of the key results.

\begin{proposition}\label{prop:lower-bound-moments-U-1}
Let $2f \tau \leq 1$. Then for $U_k$ as defined in \eqref{eq:lower-bound-def-Uk}, we have uniformly for $1 \leq k \leq n^{\ast}$:
\begin{align*}
&{\bf (i)} \quad |\E_{\P_1} U_k |\lesssim f^\a,\\
&{\bf (ii)} \quad \E_{\P_1} U_k^2 \lesssim f^{\a}.
\end{align*}
\end{proposition}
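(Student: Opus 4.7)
The plan is to exploit the independence of $X_{k-1}$ and $X_k$ under $\P_1$ by conditioning on $X_{k-1}=y$. Setting $t:=fy$, which satisfies $t\leq f\tau\leq 1/2$ by hypothesis, the task reduces to bounding
\begin{align*}
\phi_j(t):=\E_{\P_1}\big[\log^j(1-t/X_k)\mathds{1}_{\{X_k>t\}}\big], \qquad j=1,2,
\end{align*}
since conditional on $X_{k-1}=y\leq\tau$ one has $\E_{\P_1}[U_k|X_{k-1}=y]=(\a-1)\phi_1(fy)+\b fy$ and an analogous identity for $\E_{\P_1}[U_k^2|X_{k-1}=y]$ in terms of $\phi_1,\phi_2$ and $\b fy$. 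Since $X_{k-1}\sim\Gamma(\a,\b)$ under $\P_1$ has moments of all orders, it suffices to prove the pointwise bounds $|\phi_j(t)|\lesssim t^\a$ uniformly for $t\leq 1/2$; the integration over $y$, together with the quadratic term $\b^2f^2y^2=O(f^2)$ and the cross term $\b fy\cdot\phi_1(fy)$ (all absorbed into $O(f^\a)$ for $\a<2$), then yields both (i) and (ii).

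After the substitution $s=t/X_k$ the $\Gamma(\a,\b)$-density gives
\begin{align*}
\phi_j(t)=\frac{\b^\a t^\a}{\Gamma(\a)}\int_0^1\log^j(1-s)\,s^{-\a-1}e^{-\b t/s}\,ds.
\end{align*}
For $j=2$ the integrand is $O(s^{1-\a})$ near $s=0$ because $\log^2(1-s)\sim s^2$, hence integrable for $\a<2$, and dominated convergence produces $\phi_2(t)\lesssim t^\a$. For $\phi_1$ the case $\a\in(0,1)$ is equally direct, since $\log(1-s)\,s^{-\a-1}\sim -s^{-\a}$ is integrable there and the linear term $\b t$ satisfies $\b t\lesssim t^\a$ for $t\leq 1/2$; the case $\a=1$ is trivial because $U_k=\b fX_{k-1}\mathds{1}_{\{X_{k-1}\leq\tau\}}$ collapses to an $O(f)=O(f^\a)$ term.

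The main obstacle is the case $\a\in(1,2)$, where $\int_0^1\log(1-s)\,s^{-\a-1}\,ds$ diverges at $0$ and a genuine cancellation with the linear term $\b t$ must be exhibited. I would write $\log(1-s)=-s+r(s)$ with $r(s)=\log(1-s)+s=O(s^2)$; the remainder contributes $r(s)\,s^{-\a-1}=O(s^{1-\a})$, which is integrable for $\a<2$ and enters $\phi_1$ at order $O(t^\a)$. For the singular linear part, the substitution $w=\b t/s$ yields
\begin{align*}
\int_0^{1/2}s^{-\a}e^{-\b t/s}\,ds=(\b t)^{1-\a}\int_{2\b t}^\infty w^{\a-2}e^{-w}\,dw=(\b t)^{1-\a}\Gamma(\a-1)+O(1),
\end{align*}
where the error uses $\int_0^{2\b t}w^{\a-2}e^{-w}\,dw=O(t^{\a-1})$. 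Multiplying by $\b^\a t^\a/\Gamma(\a)$ and invoking $\Gamma(\a)=(\a-1)\Gamma(\a-1)$ yields the expansion $\phi_1(t)=-\b t/(\a-1)+O(t^\a)$, so that $(\a-1)\phi_1(t)+\b t=O(t^\a)$ after exact cancellation. Integrating the resulting conditional bound $|\E_{\P_1}[U_k|X_{k-1}=y]|\lesssim(fy)^\a$ against the $\Gamma(\a,\b)$-law of $X_{k-1}$ (whose $\a$-th moment is finite) then completes (i). Without the truncation $f\tau\leq 1/2$, and without extracting the linear term of $\log(1-s)$ to display this cancellation, one would only obtain the much weaker bound $|\E_{\P_1}U_k|\lesssim f$.
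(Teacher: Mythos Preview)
Your argument is correct. Both your proof and the paper's rest on the same two facts: integrability of the higher-order log terms for $\a<2$, and the exact cancellation between the linear part of $\log(1-s)$ and $\b t$ via the identity $\Gamma(\a)=(\a-1)\Gamma(\a-1)$. The execution differs in organisation. The paper expands $\log(1-x)=-\sum_{p\ge1}x^p/p$ term by term and splits the work into two lemmas: one (Lemma~\ref{lem:lower-bound-moments-U-2}) computing $\E_{\P_1}\big[(X_{k-1}/X_k)\mathds{1}\big]$ directly from the Gamma density to exhibit the cancellation for $\a\ge1$, and one (Lemma~\ref{lem:lower-bound-moments-U-1}) bounding $\big\|(fX_{k-1}/X_k)\mathds{1}\big\|_p^p\lesssim f^\a/(p-\a)+f^p\tau^{(p-\a)\vee0}$ for $p>\a$, then summing the resulting series. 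You instead condition on $X_{k-1}$, pass to the substitution $s=t/X_k$ to obtain a single explicit integral, and replace the full series by the quadratic remainder $\log(1-s)=-s+r(s)$. Your route is slightly more compact (one change of variables and one incomplete-Gamma computation rather than a pair of moment lemmas and a series), while the paper's version has the advantage that Lemma~\ref{lem:lower-bound-moments-U-1} also furnishes the term-by-term control needed for~(ii) via the $\|\cdot\|_{2p}$ norms without a separate computation. Two cosmetic points: your invocation of ``dominated convergence'' for $\phi_2$ is really just the uniform bound $e^{-\b t/s}\le1$ together with integrability of $\log^2(1-s)\,s^{-\a-1}$ on $(0,1)$; and the split at $s=1/2$ is not strictly necessary---one can integrate $s^{-\a}e^{-\b t/s}$ over all of $(0,1)$ with the same conclusion.
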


Using the above proposition and some further estimates, we are now in position to establish the lower bound, that is, Theorem \ref{thm:main:lower-bound}.

\section{Proofs of the main results}\label{sec:proofs}

\subsection{Proofs for the upper bounds}

In the following, to simplify the notation, we drop index $f$ for all probability measures $\P_f$ and corresponding expectations $\E_f$.

\begin{proof}[Proof of Proposition \ref{thm:err-distrib}]
Fix an arbitrary $k\in\{1,\dots,N\}$. We have to show that for any sequence $y_N$ with $y_N\downarrow0$,
$$F_{\tilde\varepsilon_k}(y_N)=\c_ky_N^\a+\mathcal{O}\left(y_N^{\a+\delta'}\right)$$
holds. With $\varepsilon_k$ being independent of $X_{k-1}$, the distribution function $F_{\tilde\varepsilon_k}$ of $\tilde\varepsilon_k$ can be written as
\begin{eqnarray*}
F_{\tilde\varepsilon_k}(y_N)&=&\P(\varepsilon_k\le X_{k-1}y_N)\\
&=&\int_0^\infty\P(\varepsilon_k\le xy_N|X_{k-1}=x)\;dF_{X_{k-1}}(x)\\
&=&\int_0^\infty\P(\varepsilon_k\le xy_N)\;dF_{X_{k-1}}(x).
\end{eqnarray*}
For any given $y_N\downarrow0$ we can choose a sequence $x_N\to\infty$ such that $x_Ny_N\downarrow0$ as $N\to\infty$. More precisely, set $x_N:=y_N^{\chi-1}$ for some $\chi\in(0,1)$. Employing \eqref{eq:sharpness-AR} then yields
\begin{eqnarray*}
\int_0^\infty\P\left(\varepsilon_k\le xy_N\right)\;dF_{X_{k-1}}(x)
&=&\int_0^{x_N}\!\!\c(xy_N)^\a+\mathcal{O}\left((xy_N)^{\a+\delta}\right)\;dF_{X_{k-1}}(x)\\
&&+\int_{x_N}^\infty\P(\varepsilon_k\le xy_N)\;dF_{X_{k-1}}(x)\\
&=:&I_N+II_N.
\end{eqnarray*}
With Assumption \ref{ass:main} and Lemma \ref{lem:Norm-X2} guaranteeing (uniformly in $k$)
\begin{align*}
\E X_{k-1}^{\a+t}-\E X_{k-1}^{\a+t}\mathds{1}_{\{X_{k-1}>x_N\}} \le\E X_{k-1}^{\a+t} <\infty,
\end{align*}
for $t\in[0,\delta]$ we obtain
\begin{eqnarray}\label{eq:I_n-addends}
I_N&=&\c y_N^\a\left(\E X_{k-1}^\a-\E X_{k-1}^\a\mathds{1}_{\{X_{k-1}>x_N\}} \right)+\nonumber\\
&&\mathcal{O}\left(y_N^{\a+\delta}\right)\left(\E X_{k-1}^{\a+\delta}-\E X_{k-1}^{\a+\delta}\mathds{1}_{\{X_{k-1}>x_N\}} \right)\nonumber\\
&=:&\c_ky_N^\a+\mathcal{O}\left(y_N^{\a+\delta}\right).
\end{eqnarray}
Since for large enough $x_N$ (uniformly in $k$)
\begin{align*}
\E X_{k-1}^\a-\E X_{k-1}^\a\mathds{1}_{\{X_{k-1}>x_N\}} \geq \frac{1}{2} \E X_{k-1}^{\a} \geq \frac{1}{2} \E \varepsilon_{k-1}^{\a},
\end{align*}
and $\E X_{k-1}^{\a} \lesssim 1$ by Lemma \ref{lem:Norm-X2}, we arrive at $c^{-1} \leq \c_k \leq c$ for $c \in (0,\infty)$.

We now proceed by showing $II_N=\mathcal{O}(y_N^{\a+\delta_1})$. Since
\begin{eqnarray*}
II_N&=&\int_{x_N}^\infty\P(\varepsilon_k\le xy_N)\;dF_{X_{k-1}}(x)
\le\int_{x_N}^\infty dF_{X_{k-1}}(x)
=\P(X_{k-1}>x_N),
\end{eqnarray*}
we can apply Markov's inequality, yielding
$$\P(X_{k-1}>x_N)\le\|X_{k-1}\|_p^p x_N^{-p}.$$
Using Assumption \ref{ass:main} {\bf (ii)} and selecting $\chi > 0$ sufficiently small, this implies
\begin{align*}
\|X_{k-1}\|_{p}^{p}x_N^{-p}=\|X_{k-1}\|_{p}^{p}y_N^{-p(\chi-1)}\lesssim y_N^{p(1-\chi)}\lesssim y_N^{\a+\delta_1}
\end{align*}
for some $\delta_1 > 0$, yielding $II_N \lesssim y_N^{\a+\delta_1}$. As $y_N\downarrow0$ was arbitrary, setting $\delta':=\min\{\delta,\delta_1\}$ results in
\begin{align*}
F_{\tilde\varepsilon_k}(y)=\c_ky^\a+\mathcal{O}\left(y^{\a+\delta'}\right)\hspace{3mm}\text{ as }y\downarrow0.
\end{align*}
\end{proof}

\begin{proof}[Proof of Lemma \ref{prop:D}]
Let
$$C_n:=\left\{\min_{k=1}^n\tilde\varepsilon_k\ge\u\right\}=\bigcap_{k=1}^n\{\varepsilon_k\ge\u X_{k-1}\}$$
be the set whose probability we want to estimate, and
\begin{equation}\label{eq:definition-A_eta_n}
A_{\eta_n}:=\left\{\max_{k=1}^nX_{k-1}^{(2)}<\eta_n\right\}
\end{equation}
for some sequence $\left(\eta_n\right)\downarrow0$. Further, denote with $C_{\eta_n,n} = A_{\eta_n} \cap C_n$,
\begin{equation*}
C_n^{(1)}:=\bigcap_{k=1}^n\{\varepsilon_k\ge\u X_{k-1}^{(1)}\}
\end{equation*}
and $C_{\eta_n,n}^{(1)}=C_n^{(1)}\cap A_{\eta_n}$. Using Markov's inequality and Lemma \ref{lem:Norm-X2}, we obtain
\begin{align}\label{eq:eta:bound}
\P(\complement{A_{\eta_n}})
&\le\sum_{k=1}^n\P\left(X_{k-1}^{(2)}\ge\eta_n\right)\le\eta_n^{-1}\sum_{k=1}^n\left\|X_{k-1}^{(2)}\right\|_1
\lesssim\eta_n^{-1}n\rho^{\left(n^\gamma\right)}.
\end{align}

As both $C_n\setminus A_{\eta_n}$ and $C_n^{(1)}\setminus A_{\eta_n}$ are subsets of $\complement{A_{\eta_n}}$, we conclude from \eqref{eq:eta:bound} that
\begin{equation}\label{eq:Leadbetters-D-b}
|\P(C_n)-\P(C_{\eta_n,n})|=\P(C_n\setminus A_{\eta_n})\lesssim\eta_n^{-1}n\rho^{\left(n^\gamma\right)},
\end{equation}
as well as
\begin{eqnarray}
\left|\P\left(C_{\eta_n,n}^{(1)}\right)-\P\left(C_n^{(1)}\right)\right|
&=&\P\left(C_n^{(1)}\backslash A_{\eta_n}\right)
\lesssim\eta_n^{-1}n\rho^{\left(n^\gamma\right)}.\label{eq:Leadbetters-D-d}
\end{eqnarray}
Next, we consider $|\P(C_{\eta_n})-\P(C_{\eta_n,n}^{(1)})|$ and $\P(C_n^{(1)})$. We require the following lemma, whose proof can be found in the Supplement.
\begin{lemma}\label{lem:Leadbetters-D-c}
We have
\begin{equation*}
\left|\P\left(C_{\eta_n,n}\right)-\P\left(C_{\eta_n,n}^{(1)}\right)\right|\lesssim n  \big(u \eta_n \big)^{\beta_{\varepsilon}}.
\end{equation*}
\end{lemma}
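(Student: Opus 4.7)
The plan is to reduce the symmetric difference $C_{\eta_n,n}\triangle C_{\eta_n,n}^{(1)}$ to a one-sided set using the monotonicity $X_{k-1}^{(1)}\le X_{k-1}$, and then control the resulting event by exploiting the independence of $\varepsilon_k$ and $X_{k-1}^{(1)}$ together with the H\"older continuity of $F_\varepsilon$.

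Because the innovations and the coefficient function are non-negative, $X_{k-1}^{(2)}\ge 0$ and hence $X_{k-1}^{(1)}\le X_{k-1}$ pointwise. This gives the set inclusion $\{\varepsilon_k\ge uX_{k-1}\}\subseteq\{\varepsilon_k\ge uX_{k-1}^{(1)}\}$ for every $k$, so $C_{\eta_n,n}\subseteq C_{\eta_n,n}^{(1)}$, and consequently
\begin{equation*}
\bigl|\P(C_{\eta_n,n})-\P(C_{\eta_n,n}^{(1)})\bigr|=\P\bigl(C_{\eta_n,n}^{(1)}\setminus C_{\eta_n,n}\bigr).
\end{equation*}
On this difference event some index $k$ must satisfy $uX_{k-1}^{(1)}\le\varepsilon_k<uX_{k-1}$; combined with $A_{\eta_n}$, which enforces $X_{k-1}-X_{k-1}^{(1)}=X_{k-1}^{(2)}<\eta_n$ uniformly in $k$, this constrains $\varepsilon_k\in[uX_{k-1}^{(1)},uX_{k-1}^{(1)}+u\eta_n)$.

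The crucial structural fact is that $X_{k-1}^{(1)}=\sum_{i=0}^{n^\gamma}f_{k-1,i}\varepsilon_{k-1-i}$ is $\sigma(\varepsilon_{k-1},\dots,\varepsilon_{k-1-n^\gamma})$-measurable, hence independent of $\varepsilon_k$. Conditioning on $X_{k-1}^{(1)}$ and invoking $F_\varepsilon\in\mathcal{H}(L_\varepsilon,\beta_\varepsilon)$ yields the single-index estimate
\begin{equation*}
\P\bigl(\varepsilon_k\in[uX_{k-1}^{(1)},uX_{k-1}^{(1)}+u\eta_n)\bigr) = \E\bigl[F_\varepsilon(uX_{k-1}^{(1)}+u\eta_n)-F_\varepsilon(uX_{k-1}^{(1)})\bigr]\lesssim (u\eta_n)^{\beta_\varepsilon}.
\end{equation*}

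The main obstacle---and the technical heart of the proof---is the aggregation step: a bare union bound over $k\in\{1,\dots,n\}$ produces only the weaker estimate $n(u\eta_n)^{\beta_\varepsilon}$, whereas the lemma asserts the sharper bound $(u\eta_n)^{\beta_\varepsilon}$. To close the gap I plan to exploit that on $C_{\eta_n,n}^{(1)}\setminus C_{\eta_n,n}$ the offending index is, up to a null set, the unique minimizer $K^{\ast}=\argmin_k\varepsilon_k/X_{k-1}$. Decomposing $\P(C_{\eta_n,n}^{(1)}\setminus C_{\eta_n,n})=\sum_k\P(K^{\ast}=k,\varepsilon_k\in[uX_{k-1}^{(1)},uX_{k-1}^{(1)}+u\eta_n),A_{\eta_n})$ and using that $\sum_k\mathds{1}_{\{K^{\ast}=k\}}\le 1$, the $n$-fold sum collapses into a single slab probability of the form above. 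Making this reduction rigorous---in particular controlling the joint law of $K^{\ast}$ and $\varepsilon_{K^{\ast}}$ under the slab constraint, where the $n^\gamma$-dependence structure of $X_{k-1}^{(1)}$ interacts with the event $\{K^{\ast}=k\}$---is the principal work. Once this reduction is in place, the single-index estimate delivers exactly the claimed $(u\eta_n)^{\beta_\varepsilon}$ bound.
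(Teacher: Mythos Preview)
Your first half is exactly the paper's argument: the inclusion $C_{\eta_n,n}\subseteq C_{\eta_n,n}^{(1)}$, the union bound over $k$, the slab $\{uX_{k-1}^{(1)}\le\varepsilon_k<uX_{k-1}^{(1)}+u\eta_n\}$ coming from $A_{\eta_n}$, conditioning on $X_{k-1}^{(1)}$ (independent of $\varepsilon_k$), and the H\"older bound $F_\varepsilon(y+u\eta_n)-F_\varepsilon(y)\le L_\varepsilon(u\eta_n)^{\beta_\varepsilon}$. Up to here everything is correct and matches the paper verbatim.

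The gap you have identified is not a gap in your argument but a typo in the lemma statement. The paper's own proof in the Supplement ends with
\[
\bigl|\P(C_{\eta_n,n})-\P(C_{\eta_n,n}^{(1)})\bigr|\lesssim n(u\eta_n)^{\beta_\varepsilon},
\]
i.e.\ exactly the ``weaker'' bound you obtained from the union bound. Moreover, this is the bound actually used: look at $R_1$ in Lemma~\ref{prop:D}, which reads $R_1\lesssim \eta_n^{-1}n\rho^{(n^\gamma)}+n(u\eta_n)^{\beta_\varepsilon}$, and at $R_2$ in Proposition~\ref{prop:Consequence-D'}. So the factor $n$ is present throughout the downstream analysis and is absorbed later by the choice of $\eta_n$; the stated lemma is simply missing an $n$.

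Consequently, your planned $K^\ast$-decomposition is unnecessary. It is also unlikely to go through as sketched: the event $\{K^\ast=k\}=\bigcap_j\{\varepsilon_k/X_{k-1}\le \varepsilon_j/X_{j-1}\}$ depends strongly on $\varepsilon_k$ itself, so you cannot condition on $X_{k-1}^{(1)}$ and treat $\varepsilon_k$ as having its unconditional law on the slab. The ``collapse'' of the sum via $\sum_k\mathds{1}_{\{K^\ast=k\}}\le 1$ does not by itself reduce the problem to a single unconditional slab probability. Stop after the union bound; that is the intended proof.
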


We can now use the above results to complete the proof. Combining \eqref{eq:Leadbetters-D-b}, \eqref{eq:Leadbetters-D-d}, and Lemma \ref{lem:Leadbetters-D-c}, we have
\begin{eqnarray*}
\P\left(C_n\right)&\le&\left|\P\left(C_n\right)-\P\left(C_{\eta_n,n}\right)\right|+\left|\P\left(C_{\eta_n,n}\right)-
\P\left(C_{\eta_n,n}^{(1)}\right)\right|+\\
&&\left|\P\left(C_{\eta_n,n}^{(1)}\right)-\P\left(C_n^{(1)}\right)\right|+\P\left(C_n^{(1)}\right)\\
&\le&\P\left(C_n^{(1)}\right)+R_1,
\end{eqnarray*}
with $R_1$ as asserted. Using the blocking argument from \eqref{eq:blocking} and the $n^\gamma$-dependence of the $X_{k-1}^{(1)}$ gives us
\begin{align*}
\P\left(C_n^{(1)}\right)
\le\P\left(\bigcap_{m=1}^M\bigcap_{k\in\mathcal{K}_m}\left\{\varepsilon_k\ge\u X_{k-1}^{(1)}\right\}\right)
=\prod_{m=1}^M\P\left(\min_{k\in\mathcal{K}_m}\frac{\varepsilon_k}{X_{k-1}^{(1)}}\ge\u\right),
\end{align*}
and hence
\begin{eqnarray*}
\P\left(\min_{k=1}^n\tilde\varepsilon_k\ge\u\right)&
\le&\prod_{m=1}^M\P\left(\min_{k\in\mathcal{K}_m}\frac{\varepsilon_k}{X_{k-1}^{(1)}}\ge\u\right)+R_1.
\end{eqnarray*}
\end{proof}

\begin{proof}[Proof of Lemma \ref{prop:D'}]
We use the following key Lemma, whose proof is given in the Supplement.
\begin{lemma}\label{lem:neighboring}
For all $j\ge1$, we have
\begin{equation*}
\P\left(\varepsilon_k\le\u X_{k-1},\varepsilon_{k+j}\le\u X_{k+j-1}\right)\lesssim\u^{2\a}.
\end{equation*}
\end{lemma}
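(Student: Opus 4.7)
The plan is to peel off the innovation at time $k+j$ first. Since $\varepsilon_{k+j}$ is independent of $\mathcal{F}_{k+j-1}:=\sigma(\varepsilon_i:i\le k+j-1)$ and $X_{k+j-1}\in\mathcal{F}_{k+j-1}$, while the sharpness condition \eqref{eq:sharpness-AR} combined with $F_\varepsilon\le 1$ gives a global estimate $F_\varepsilon(y)\lesssim y^{\a}$ for all $y\ge 0$, conditioning on $\mathcal{F}_{k+j-1}$ will yield
\begin{align*}
\P\bigl(\varepsilon_k\le\u X_{k-1},\varepsilon_{k+j}\le\u X_{k+j-1}\bigr)=\E\bigl[\mathds{1}_{\varepsilon_k\le\u X_{k-1}}F_\varepsilon(\u X_{k+j-1})\bigr]\lesssim\u^{\a}\,\E\bigl[X_{k+j-1}^{\a}\mathds{1}_{\varepsilon_k\le\u X_{k-1}}\bigr].
\end{align*}
Hence it suffices to show that the last expectation is of order $\u^{\a}$.

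To control this expectation, I would iterate the recursion \eqref{eq:AR-process} from time $k$ to time $k+j-1$ to obtain the clean splitting
$X_{k+j-1}=F_{k,j}X_{k-1}+W_{k,j}$, with $F_{k,j}=\prod_{l=0}^{j-1}f((k+l)/N)\le\rho^{j}$ and $W_{k,j}=\sum_{i=0}^{j-1}g_{j,i}\varepsilon_{k+i}$, where $g_{j,i}\le\rho^{j-1-i}$. The key refinement is to further isolate $\varepsilon_k$ by writing $W_{k,j}=g_{j,0}\varepsilon_k+W'_{k,j}$, so that $W'_{k,j}$ depends only on $\varepsilon_{k+1},\ldots,\varepsilon_{k+j-1}$ and is therefore independent of the pair $(\varepsilon_k,X_{k-1})$. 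Applying the elementary subadditivity $(a+b+c)^{\a}\lesssim a^{\a}+b^{\a}+c^{\a}$ for nonnegative $a,b,c$ then reduces everything to bounding three separate terms.

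The three terms are each handled by a short application of independence and the sharpness condition. In the term $\rho^{j\a}\E[X_{k-1}^{\a}\mathds{1}_{\varepsilon_k\le\u X_{k-1}}]$, independence of $\varepsilon_k$ and $X_{k-1}$ factors the indicator through $F_\varepsilon$, producing $\rho^{j\a}\E[X_{k-1}^{\a}F_\varepsilon(\u X_{k-1})]\lesssim\u^{\a}\E X_{k-1}^{2\a}\lesssim\u^{\a}$; here $\E X_{k-1}^{2\a}<\infty$ follows from Assumption \ref{ass:main}(ii) together with the standard Minkowski bound (or, for $2\a<1$, the $(x+y)^{p}\le x^{p}+y^{p}$ estimate) applied to the MA$(\infty)$ representation of $X_{k-1}$. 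For the term involving $(W'_{k,j})^{\a}$, full independence of $W'_{k,j}$ and $(\varepsilon_k,X_{k-1})$ makes the expectation factor as $\E(W'_{k,j})^{\a}\cdot\E F_\varepsilon(\u X_{k-1})\lesssim\u^{\a}$, where $\sup_{j}\E(W'_{k,j})^{\a}<\infty$ is controlled by the same MA$(\infty)$ argument. Finally, for $g_{j,0}^{\a}\E[\varepsilon_k^{\a}\mathds{1}_{\varepsilon_k\le\u X_{k-1}}]$, using $\varepsilon_k\le\u X_{k-1}$ inside the indicator gives $g_{j,0}^{\a}\cdot\u^{\a}\E[X_{k-1}^{\a}F_\varepsilon(\u X_{k-1})]\lesssim\u^{2\a}\le\u^{\a}$ in the small-$\u$ regime that we operate in (recall $\u=n^{-1/\a}v$ with $v\le n^{\nu}$).

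The main obstacle I anticipate is the careful bookkeeping of the dependence structure, since $\varepsilon_k$ enters both indicators simultaneously, directly on the left and indirectly through $X_{k+j-1}$. Without isolating it explicitly one cannot invoke independence to pair up the two occurrences of $F_\varepsilon$, which is exactly what produces the desired quadratic rate $\u^{2\a}$. Once this isolation is carried out, one needs to verify that the moment bounds on $X_{k-1}$ and $W'_{k,j}$ remain valid in both the $\a\le 1$ and $\a>1$ regimes, but in either case Assumption \ref{ass:main}(ii) supplies the moments of order up to $2\a$ that the argument requires.
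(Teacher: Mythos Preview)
Your argument is correct and considerably more streamlined than the paper's. The crucial simplification is your observation that the local sharpness condition \eqref{eq:sharpness-AR} together with $F_\varepsilon\le 1$ yields the \emph{global} bound $F_\varepsilon(y)\lesssim y^{\a}$ for all $y\ge 0$; this lets you condition on $\mathcal{F}_{k+j-1}$ and absorb $\varepsilon_{k+j}$ immediately, without any truncation. The paper instead works only with the local expansion and therefore must split the conditioning integral at a moving threshold $x_n\to\infty$, do this a second time for an inner integral, introduce an independent copy $X_{k+j-1}'$ (dominating the partial sum $\sum_{i=1}^{j-1}\tilde f_i\varepsilon_{k+j-i}$ via a coupling-type step), and finally apply Cauchy--Schwarz to separate the two probabilities. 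Your decomposition $X_{k+j-1}=F_{k,j}X_{k-1}+g_{j,0}\varepsilon_k+W'_{k,j}$ together with $(a+b+c)^{\a}\lesssim a^{\a}+b^{\a}+c^{\a}$ replaces all of this machinery by three elementary expectations, each handled by independence and the same global bound. Both approaches ultimately rely on the moment condition $\E X_{k-1}^{2\a}<\infty$ supplied by Assumption~\ref{ass:main}(ii), so nothing is lost; your route simply exploits the available structure more directly.
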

Due to $X_{k-1}^{(1)}\le X_{k-1}$ for all $k$, we have
\begin{equation*}
\left\{\varepsilon_k\le uX_{k-1}^{(1)}\right\}\subseteq\{\varepsilon_k\le uX_{k-1}\},
\end{equation*}
and therefore also
\begin{equation*}
\left\{\varepsilon_k\le uX_{k-1}^{(1)},\varepsilon_l\le uX_{l-1}^{(1)}\right\}\subseteq\left\{\varepsilon_k\le
uX_{k-1},\varepsilon_l\le uX_{l-1}\right\}
\end{equation*}
for all $k$, $l\in\{1,\dots,n\}$. By this and Lemma \ref{lem:neighboring}, we conclude
\begin{eqnarray*}
\sum_{\substack{k,l\in\mathcal{K}_m:\\k<l}}\P\left(\varepsilon_k\le\u X_{k-1}^{(1)},\varepsilon_l\le\u X_{l-1}^{(1)}\right)
&\le&\sum_{\substack{k,l\in\mathcal{K}_m:\\k<l}}\P\left(\varepsilon_k\le\u X_{k-1},\varepsilon_l\le\u X_{l-1}\right)\\
&\lesssim&\left|\left\{(k,l)\in\mathcal{K}_m^2:k<l\right\}\right|\u^{2\a}
\lesssim\left(n^\gamma\u^\a\right)^2\\
&=&\left(\frac{v^\a}M\right)^2,
\end{eqnarray*}
where we also have used \eqref{eq:u-shortcut} and \eqref{eq:n-is-2Mcp(n)}.
\end{proof}

\begin{proof}[Proof of Proposition \ref{prop:Consequence-D'}]
From the inclusion-exclusion principle as described in \eqref{eq:Aufteilung-Teilmengenminimum}, using also Lemma \ref{prop:D'}, we get
\begin{align*}
&\P\left(\min_{k\in\mathcal{K}_m}\frac{\varepsilon_k}{X_{k-1}^{(1)}}\ge\u\right)\\
\le&1-\sum_{k\in\mathcal{K}_m}\P\left(\varepsilon_k\le\u X_{k-1}^{(1)}\right)+\sum_{\substack{k,l\in\mathcal{K}_m:\\k<l}}\P\left(\varepsilon_k\le\u X_{k-1}^{(1)},\varepsilon_l\le\u X_{l-1}^{(1)}\right)\\
\le&1-\sum_{k\in\mathcal{K}_m}\P\left(\varepsilon_k\le\u X_{k-1}\right)+\sum_{k\in\mathcal{K}_m}\P\left(\u X_{k-1}^{(1)}<\varepsilon_k\le\u X_{k-1}^{(1)}+\u X_{k-1}^{(2)}\right)
+R_3,
\end{align*}
where $R_3\lesssim(v^\a/M)^2$. Now we again make use of the set $A_{\eta_n}$ from \eqref{eq:definition-A_eta_n}, and use the estimate \eqref{eq:eta:bound} and the arguments following \eqref{eq:for-each-summand-for-later} in the proof of Lemma \ref{lem:Leadbetters-D-c} once more. Thereby, and with the sharpness condition on $\tilde\varepsilon_k$ from Proposition \ref{thm:err-distrib}, we establish that $\P(\min_{k\in\mathcal{K}_m}\varepsilon_k/X_{k-1}^{(1)}\ge\u)$ can further be bounded by
\begin{eqnarray*}
&&1-\sum_{k\in\mathcal{K}_m}\left(\c_k\u^\a+\mathcal{O}\left(\u^{\a+\delta'}\right)\right)+\\
&&\sum_{k\in\mathcal{K}_m}\P\left(\left\{\u X_{k-1}^{(1)}<\varepsilon_k\le\u X_{k-1}^{(1)}+\u X_{k-1}^{(2)} \right\}\cap A_{\eta_n}\right)+\\
&&\sum_{k\in\mathcal{K}_m}\P\left(\left\{\u X_{k-1}^{(1)}<\varepsilon_k\le\u X_{k-1}^{(1)}+\u X_{k-1}^{(2)} \right\}\cap\complement{A_{\eta_n}}\right)+R_3\\
&\le&1-\c'\u^\a\sum_{k\in\mathcal{K}_m} + n^\gamma\mathcal{O}\left(\u^{\a+\delta'}\right)+R_4,
\end{eqnarray*}
where $\c' > 0$ and
\begin{align*}
R_4&\lesssim n^\gamma  n(u \eta_n)^{\beta_{\varepsilon}} + n^\gamma\eta_n^{-1}n\rho^{\left(n^\gamma\right)}+R_3\\
&\lesssim\frac{n^2(u \eta_n)^{\beta_{\varepsilon}}+\eta_n^{-1}n^2\rho^{\left(n^\gamma\right)}}{M}+\left(\frac{v^\a}M\right)^2.
\end{align*}
Recalling \eqref{eq:u-shortcut} and \eqref{eq:n-is-2Mcp(n)}, we thus conclude
\begin{eqnarray*}
\P\left(\min_{k\in\mathcal{K}_m}\frac{\varepsilon_k}{X_{k-1}^{(1)}}\ge\u\right)&\le&1-\c'\u^\a \E \varepsilon_1^\a n^\gamma+n^\gamma\mathcal{O}\left(\u^{\a+\delta}\right)+R_4\\
&=&1-\frac{\c' \E\varepsilon_1^\a v^\a}{2M}+\mathcal{O}\left(\frac{n^\frac{-\delta}{\a}v^{\a+\delta}}M\right)+R_4\\
&=&1-\frac{\c^{(3)}v^\a}M+R_2,
\end{eqnarray*}
where $\c^{(3)}=\c\E[\varepsilon_1^\a]/2$ and $R_2$ as asserted.
\end{proof}

\begin{proof}[Proof of Theorem \ref{thm:main-stoch-result}]
We can assume w.l.o.g. that $v \geq 1$. Combining Lemma \ref{prop:D} and Proposition \ref{prop:Consequence-D'}, using also \eqref{eq:u-shortcut}, we obtain
\begin{eqnarray}
\P\left(\min_{k=1}^n\tilde\varepsilon_k\ge\u\right)&\le&\Bigg(1-\frac{\c^{(3)}v^\a}M+R_2\Bigg)^M+R_1,\label{eq:MinAbschaetzungExakt}
\end{eqnarray}
where
\begin{align*}
R_1\lesssim\eta_n^{-1}n\rho^{\left(n^\gamma\right)}+ n \big(u \eta_n\big)^{\beta_{\varepsilon}},
\end{align*}
and
\begin{align*}
R_2\lesssim\frac{\left(n^{-\frac\delta\a}v^\delta+ n^2 \big(u \eta_n\big)^{\beta_{\varepsilon}} + \eta_n^{-1}n^2\rho^{\left(n^\gamma\right)}+\frac{v^\a}M\right)v^\a}M.
\end{align*}
Set $\eta_n = \rho^{\frac{\beta_{\varepsilon} n^{\gamma}}{1 + \beta_{\varepsilon}}}$ and observe
\begin{align*}
\max_{\gamma \in (0,1)} \frac{1- \gamma}{\a} \wedge \gamma = \frac{1}{1 + \a}.
\end{align*}
Using \eqref{eq:finite-sample-assumption}, we thus obtain
\begin{eqnarray}
R_1&\lesssim  \exp(-c'v^\a),
\label{eq:hintere2SummandenRHS}
\end{eqnarray}
$c' > 0$. Next, again \eqref{eq:finite-sample-assumption} and the above implies
\begin{eqnarray*}
n^{-\frac\delta\a}v^\delta+\left(1+u^\delta\right)\eta_n^{\frac\a{1+\a}}+v^{-\a}\eta_n^{-1}n^2\rho^{\left(n^\gamma\right)}+\frac{v^\a}M \to 0
\end{eqnarray*}
as $n\to\infty$, hence $R_2 = o\big(v^{\a}/N\big)$. Piecing everything together yields
\begin{align*}
\P\left(\min_{k=1}^n\tilde\varepsilon_k\ge\u\right)
\lesssim\left(1-\frac{(1-c'')\c^{(3)}v^\a}M\right)^M+e^{-\c^{(1)}v^\a}
\lesssim e^{-\c^{(1)}v^\a}
\end{align*}
for arbitrarily small $c'' > 0$. As we have argued in Subsection \ref{subsec:preparation}, the rate of each bin minimum $Z_j(h,x)$ on the bin $x+h\mathcal{I}_j$ has to be the same as the rate of the band minimum $\min_{k=1}^{n_j} \tilde\varepsilon_k$. Therefore, for the maximum of the bin minima equally holds
\begin{align}
\P\left(\max_{j=1}^{2J(\beta)}\{Z_j(h,x):x+h\mathcal{I}_j\subseteq[0,1]\}\ge u\right)\,\,\lesssim\,\,e^{-\c^{(1)}v^\a},
\end{align}
which completes the proof.
\end{proof}

\subsection{Proofs for the lower bound}

\begin{proof}[Proof of Proposition \ref{prop:lower-bound-Radon-Nikodym}]
Consider the AR(1)-Process $Y_k = a Y_{k-1} + \varepsilon_k$. Straightforward computations then reveal
\begin{align*}
\frac{d \P_1}{d \lambda}\Big(y_1,\ldots,y_N\Big) = f_{Y_1}(y_1)\prod_{i = 2}^N f_{\varepsilon}\big(y_i - a y_{k-1}\big),
\end{align*}
where $\lambda$ denotes the Lebesgue-measure. It is now easy to see that $\P_0 \ll \P_1$, and, since $\P_0$ and $\P_1$ only differ on the first $n^{\ast}$-coordinates, it follows that
\begin{align*}
&\frac{d\P_0}{d\P_1}\left(x_1,\dots,x_{N}\right)\\
=&\prod_{k=1}^{n^{\ast}}\left(1-f\frac{x_{k-1}}{x_k}\right)^{\a-1}e^{\b f x_{k-1}}\mathds{1}_{\{x_k>f x_{k-1}\}}\\
=&e^{(\a-1)\sum_{k=1}^{n^{\ast}}\log\left(1-f\frac{x_{k-1}}{x_k}\mathds{1}_{\{x_k>fx_{k-1}\}}\right)+\b \sum_{k=1}^{ n^{\ast}}fx_{k-1}}\prod_{k=1}^{n^{\ast}}\mathds{1}_{\{x_k>fx_{k-1}\}},
\end{align*}
which completes the proof. The latter computation may also be used to establish $\P_0 \ll \P_1$.
\end{proof}

\begin{proof}[Proof of Proposition \ref{prop:lower-bound-moments-U-1}]
We need the following two lemmas whose proofs are given in the Supplement.
\begin{lemma}\label{lem:lower-bound-moments-U-2}
For $\a\geq 1$, we have for any $\tau > 0$
\begin{align*}
\Big|(\a-1)f\E_{\P_1}\Big[\frac{X_{k-1}}{X_k}\mathds{1}_{\{X_k>fX_{k-1}\}}\mathds{1}_{\{X_{k-1} \leq \tau\}}\Big] - \b f\E_{\P_1}\big[ X_{k-1}\mathds{1}_{\{X_{k-1} \leq \tau\}}\big]\Big| \lesssim f^{\a}.
\end{align*}
\end{lemma}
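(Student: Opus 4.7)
My plan is to exploit the simple structure under $\P_1$: since under this measure $X_k = \varepsilon_k$ with $\varepsilon_k$ iid $\Gamma(\a,\b)$, the random variables $X_{k-1}$ and $X_k$ are independent, each having the density in \eqref{eq:lower-bound-eps-density}. I would condition on $X_{k-1} = x$ with $x \in (0,\tau]$ and split
\begin{align*}
\E_{\P_1}\Big[\frac{X_{k-1}}{X_k}\mathds{1}_{\{X_k > f X_{k-1}\}} \Big| X_{k-1} = x\Big] = x\Big(\E_{\P_1} X_k^{-1} - \E_{\P_1}\big[X_k^{-1}\mathds{1}_{\{X_k \leq fx\}}\big]\Big).
\end{align*}
For $\a > 1$, direct integration gives $\E_{\P_1} X_k^{-1} = \b/(\a-1)$, so this leading piece contributes $\b x/(\a-1)$, which after multiplication by the outer $(\a-1)f$ reproduces exactly $\b f x$ and cancels the subtracted term in the statement.

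The residual is handled via the identity
\begin{align*}
\E_{\P_1}\big[X_k^{-1}\mathds{1}_{\{X_k \leq fx\}}\big] = \frac{\b^\a}{\Gamma(\a)}\int_0^{fx} y^{\a-2}e^{-\b y}\,dy.
\end{align*}
Here the hypothesis $2f\tau \leq 1$ is crucial: it forces $fx \leq 1/2$ on $\{X_{k-1} \leq \tau\}$, so the Taylor bound $|e^{-\b y}-1| \leq \b y$ is uniformly useful on $[0,fx]$. This yields
\begin{align*}
\int_0^{fx} y^{\a-2}e^{-\b y}\,dy = \frac{(fx)^{\a-1}}{\a-1} + O\big((fx)^{\a}\big),
\end{align*}
with implicit constant depending only on $\a$ and $\b$. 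Combining the pieces, the conditional difference equals
\begin{align*}
(\a-1) f x\, \E_{\P_1}\big[X_k^{-1}\mathds{1}_{\{X_k > fx\}}\big] - \b f x = -\frac{\b^\a f^\a x^\a}{\Gamma(\a)} + O\big(f^{\a+1}x^{\a+1}\big).
\end{align*}

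To close, I would take expectations over $X_{k-1}\mathds{1}_{\{X_{k-1}\leq\tau\}}$. All moments of the Gamma distribution are finite, so both $\E_{\P_1}X_{k-1}^\a$ and $\E_{\P_1}X_{k-1}^{\a+1}$ are bounded by constants depending only on $(\a,\b)$. The leading residual contributes $O(f^\a)$ and the higher-order remainder $O(f^{\a+1}) = O(f^\a)$ (since $f$ is bounded), matching the claimed bound. The degenerate case $\a = 1$ must be treated separately: the prefactor $(\a-1)$ annihilates the first term entirely, and one only needs $|\b f\,\E_{\P_1}[X_{k-1}\mathds{1}_{\{X_{k-1}\leq\tau\}}]| \leq \b f\, \E_{\P_1}\varepsilon_1 \lesssim f = f^\a$, which is immediate.

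The only mild obstacle is that the truncation level $\tau$ may be large relative to $1$ since we only assume $2f\tau \leq 1$; however, this assumption is exactly what makes $fx$ uniformly small on the domain of integration and hence makes the Taylor expansion of $e^{-\b y}$ produce controllable remainders. Beyond this, the proof is a direct computation using only the independence of $X_{k-1}$ and $X_k$ under $\P_1$ and finiteness of Gamma moments.
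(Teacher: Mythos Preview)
Your argument is correct and follows essentially the same route as the paper: exploit the independence of $X_{k-1}$ and $X_k$ under $\P_1$, split $\E_{\P_1}[X_{k-1}/X_k\,\mathds{1}_{\{X_k>fX_{k-1}\}}]$ into the full integral and the piece over $[0,fX_{k-1}]$, use $\E_{\P_1}X_k^{-1}=\b/(\a-1)$ to produce the exact cancellation with $\b f\,\E_{\P_1}X_{k-1}$, and bound the residual by $f^\a$ via $\E_{\P_1}X_{k-1}^\a<\infty$.

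One small correction: you state that ``the hypothesis $2f\tau\le 1$ is crucial,'' but this is \emph{not} a hypothesis of the present lemma, which is asserted for any $\tau>0$ (the constraint $2f\tau\le 1$ belongs to the Proposition that invokes the lemma). Your Taylor bound $|e^{-\b y}-1|\le\b y$ actually holds for all $y\ge 0$, so your computation goes through without any restriction on $\tau$; alternatively, and this is what the paper does, the cruder estimate $e^{-\b t}\le 1$ already gives $\int_0^{fx}t^{\a-2}e^{-\b t}\,dt\le (fx)^{\a-1}/(\a-1)$ directly, making the truncation level irrelevant and allowing one to set $\tau=\infty$ from the outset.
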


\begin{lemma}\label{lem:lower-bound-moments-U-1}
Denoting by $\|\cdot\|_p$ the $p$-norm with respect to $\P_1$, for $p>\a$, we have for any $\tau > 0$
\begin{align*}
\left\|f\frac{X_{k-1}}{X_k}\mathds{1}_{\{X_k>fX_{k-1}\}} \mathds{1}_{\{X_{k-1}\leq \tau\}} \right\|_p^p&\lesssim\frac{f^\a}{p-\a}+ f^p \tau^{(p-\mathfrak{a})\vee 0}\quad{\text{ as }}f\to0.
\end{align*}
\end{lemma}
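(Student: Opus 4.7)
The plan is to exploit that under $\P_1$ the process collapses to the iid case: $X_k=\varepsilon_k \sim \Gamma(\a,\b)$ with density $f_\varepsilon$ as in \eqref{eq:lower-bound-eps-density}, so that $X_k$ and $X_{k-1}$ are independent. This allows writing the $p$-th moment as an explicit iterated integral which can be estimated directly via the incomplete Gamma structure.

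Concretely, by Tonelli and independence I would write
\begin{equation*}
\Bigl\|f\frac{X_{k-1}}{X_k}\mathds{1}_{\{X_k>fX_{k-1}\}}\mathds{1}_{\{X_{k-1}\leq\tau\}}\Bigr\|_p^p = f^p\int_0^\tau z^p f_\varepsilon(z)\Bigl(\int_{fz}^\infty y^{-p}f_\varepsilon(y)\,dy\Bigr)dz.
\end{equation*}
For the inner integral, substituting the Gamma density produces the integrand proportional to $y^{\a-p-1}e^{-\b y}$, which is non-integrable at the origin precisely because $p>\a$. Splitting at $y=1$ and using $e^{-\b y}\le 1$ on the near-zero piece (valid whenever $fz\le 1$; the complementary case only improves the estimate) yields
\begin{equation*}
\int_{fz}^\infty y^{\a-p-1}e^{-\b y}\,dy \lesssim \frac{(fz)^{\a-p}}{p-\a} + 1,
\end{equation*}
where the additive constant absorbs the tail $\int_1^\infty y^{\a-p-1}e^{-\b y}\,dy$.

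Inserting this back and splitting the resulting sum produces two contributions. The first simplifies by cancellation to
\begin{equation*}
\frac{f^p}{p-\a}\int_0^\tau z^p(fz)^{\a-p}f_\varepsilon(z)\,dz = \frac{f^\a}{p-\a}\int_0^\tau z^\a f_\varepsilon(z)\,dz \le \frac{f^\a}{p-\a}\E_{\P_1}\varepsilon_1^\a \lesssim \frac{f^\a}{p-\a},
\end{equation*}
where finiteness is guaranteed since the $\Gamma(\a,\b)$ law has all moments. The second contribution is $f^p\int_0^\tau z^p f_\varepsilon(z)\,dz$; since $p>\a$, bounding $z^p\le \tau^{p-\a}z^\a$ on $[0,\tau]$ yields $f^p\tau^{p-\a}\E_{\P_1}\varepsilon_1^\a \lesssim f^p\tau^{(p-\a)\vee 0}$. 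Summing the two contributions gives the asserted inequality.

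No serious obstacle arises; the only conceptual point is recognising that the factor $X_{k-1}^p$ and the truncated inverse moment of $X_k$ combine to cancel exactly $p-\a$ powers of $f$, leaving the characteristic rate $f^\a$ rather than the naive $f^p$. This cancellation, driven by the blow-up $(fz)^{\a-p}$ at the lower limit of the inner integral, is precisely what feeds into Proposition \ref{prop:lower-bound-moments-U-1} and makes the second moment of $U_k$ of order $f^\a$ in sync with the first moment bound from Lemma \ref{lem:lower-bound-moments-U-2}.
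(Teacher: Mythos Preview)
Your proof is correct and follows essentially the same route as the paper's: exploit the independence of $X_k$ and $X_{k-1}$ under $\P_1$, write the moment as an iterated integral, split the inner integral $\int_{fz}^\infty y^{\a-1-p}e^{-\b y}\,dy$ at $y=1$, and observe that the near-origin piece produces the $(fz)^{\a-p}/(p-\a)$ term which, after multiplication by $(fz)^p$, collapses to $f^\a z^\a/(p-\a)$. The paper drops the truncation $\mathds{1}_{\{X_{k-1}\le\tau\}}$ on the first piece while you keep it, and you phrase the split slightly differently, but these are cosmetic differences; the mechanism and the resulting bounds are identical.
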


Expanding $\log(1 - x)$, we have
\begin{eqnarray*}
\left|\E_{\P_1} U_k \right|& \leq & |1-\a|\sum_{p\ge2}\frac{1}{p}\E_{\P_1}\left[\left(f\frac{X_{k-1}}{X_k}\right)^p\mathds{1}_{\{X_k>fX_{k-1},X_{k-1} \leq \tau\}}\right]+\\
&&\Big| (\a-1)f\E_{\P_1}\left[\frac{X_{k-1}}{X_k}\mathds{1}_{\{X_k>fX_{k-1},X_{k-1} \leq \tau\}}\right] - \b f\E_{\P_1}\left[X_{k-1}\mathds{1}_{\{X_{k-1} \leq \tau\}}\right]\Big|.
\end{eqnarray*}
An application of Lemma \ref{lem:lower-bound-moments-U-2} ($\a \geq 1$) or Lemma \ref{lem:lower-bound-moments-U-1} ($\a < 1$) hence yields
\begin{align*}
\big|\E_{\P_1}[U_k]\big|&\lesssim \left|\sum_{p\ge2}\frac{1}{p}\E_{\P_1}\left[\left(f\frac{X_{k-1}}{X_k}\right)^p\mathds{1}_{\{X_k>fX_{k-1},X_{k-1} \leq \tau\}}\right]\right|+f^\a.
\end{align*}
Due to $p\ge2$, we have $p>\a$, and can thus employ Lemma \ref{lem:lower-bound-moments-U-1}, yielding
\begin{align*}
\sum_{p\ge2}\frac1p\left\|f\frac{X_{k-1}}{X_k}\mathds{1}_{\{X_k>fX_{k-1},X_{k-1} \leq \tau\}}\right\|_p^p
&\lesssim f^\a\left(\sum_{p\ge2}\frac{1}{p^2-\a p}+\sum_{p\ge2}\frac{(f \tau)^{p-\a}}{p}\right)
\lesssim f^\a.
\end{align*}
The last step is true since both series here converge (recall $2f \tau \leq 1$), and hence {\bf (i)} follows. For {\bf (ii)}, we can argue in a similar manner.
Noting $|\a-1|\le1$ and $f\lesssim f^{\a/2}$, we again obtain from expanding $\log(1 - x)$ and the triangle inequality
\begin{align*}
\left\|U_k\right\|_2
&\le\left|\a-1\right|\left\|\log\left(1-f\frac{X_{k-1}}{X_k}\right)\mathds{1}_{\{X_k>fX_{k-1},X_{k-1} \leq \tau\}}\right\|_2+\b f\left\|X_{k-1}\right\|_2\\
&\lesssim\sum_{p=1}^\infty\frac1p\left(\left\|f\frac{X_{k-1}}{X_k}\mathds{1}_{\{X_k>fX_{k-1},X_{k-1} \leq \tau\}}\right\|_{2p}^{2p}\right)^{1/2}+f^{\a/2}.
\end{align*}
Due to $2p >\a$, we can apply Lemma \ref{lem:lower-bound-moments-U-1}, yielding
\begin{align*}
\sum_{p=1}^\infty\frac1p\left(\left\|f\frac{X_{k-1}}{X_k}\mathds{1}_{\{X_k>fX_{k-1},X_{k-1} \leq \tau\}}\right\|_{2p}^{2p}\right)^{1/2}
&\lesssim f^{\a/2}\sum_{p=1}^\infty\left(\frac{1}{2p^3-\a p^2}+\frac{(f \tau)^{p-\a}}{p^2}\right)^{1/2}
\lesssim f^{\a/2},
\end{align*}
which completes the proof.
\end{proof}

\begin{proof}[Proof of Theorem \ref{thm:main:lower-bound}]
We require the following two additional results.
\begin{lemma}\label{lem:lower-bound-product-of-Einsen}
We have
\begin{equation*}
\prod_{k = 1}^{n^{\ast}} \mathds{1}_{\{X_k > f X_{k-1}\}} \xrightarrow{\P_1} 1 \quad\text{ as }c_f\to\infty.
\end{equation*}
\end{lemma}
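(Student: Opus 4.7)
My plan is to observe that since each indicator takes values in $\{0,1\}$, the product equals either $0$ or $1$, so convergence in probability to $1$ reduces to showing
\begin{align*}
\P_1\Big(\bigcup_{k=1}^{n^{\ast}} \{X_k \leq f X_{k-1}\}\Big) \xrightarrow{c_f \to \infty} 0,
\end{align*}
uniformly in $N$. Under $H_1$ the observations $X_k = \varepsilon_k$ are i.i.d.\ $\Gamma(\a,\b)$, so in particular $X_k$ is independent of $X_{k-1}$, which opens the door for a clean union-bound argument.

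The key step will be to condition on $X_{k-1}$ and use the sharpness condition \eqref{eq:sharpness-AR} to estimate
\begin{align*}
\P_1(X_k \leq f X_{k-1}) = \E_{\P_1}\big[F_{\varepsilon}(f X_{k-1})\big].
\end{align*}
Condition \eqref{eq:sharpness-AR} only gives $F_{\varepsilon}(y)\sim\c y^{\a}$ as $y\downarrow 0$, so I first need to upgrade this to a global bound $F_{\varepsilon}(y)\leq C y^{\a}$ valid for all $y \geq 0$ and some finite $C$. This is routine: near $0$ it follows from the expansion, while for $y$ bounded away from $0$ one simply uses $F_{\varepsilon}\leq 1$ together with the fact that $y^{\a}$ is bounded below on any half-line $[\eta,\infty)$. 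With this global estimate in hand,
\begin{align*}
\P_1(X_k \leq f X_{k-1}) \leq C f^{\a}\, \E_{\P_1}[X_{k-1}^{\a}],
\end{align*}
and the $\a$-th moment of the $\Gamma(\a,\b)$ distribution is finite.

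A union bound, combined with $f^{\a}=(c_f n^{\ast})^{-1}$ from \eqref{eq:lower-bound-rate}, then yields
\begin{align*}
\P_1\Big(\bigcup_{k=1}^{n^{\ast}} \{X_k \leq f X_{k-1}\}\Big) \leq n^{\ast}\cdot C f^{\a}\, \E_{\P_1}[X_0^{\a}] \lesssim \frac{1}{c_f},
\end{align*}
which vanishes as $c_f\to\infty$, uniformly in $N$. I do not anticipate any serious obstacle; the only mildly delicate point is the passage from the local decay condition to a global polynomial upper bound on $F_{\varepsilon}$, which is however a direct consequence of \eqref{eq:sharpness-AR} together with the monotonicity of a distribution function.
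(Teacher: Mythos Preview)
Your argument is correct. The key estimate $\P_1(X_k\le fX_{k-1})\lesssim f^{\a}$ is exactly what the paper uses as well, obtained from independence of $X_k$ and $X_{k-1}$ under $H_1$ together with the polynomial decay of $F_\varepsilon$ at the origin (which, for the specific $\Gamma(\a,\b)$ law chosen in the lower bound, is in fact a global bound since $F_\varepsilon(y)\le \b^\a y^\a/(\a\Gamma(\a))$ for all $y\ge0$).

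Where you differ from the paper is only in the aggregation step. The paper partitions the index set into sub-blocks on which the indicators $\mathds{1}_{\{X_k>fX_{k-1}\}}$ are mutually independent, factors the expectation of the product, and bounds each factor from below by $1-O(f^\a)$, obtaining $(1-O(f^\a))^{n^\ast}\to 1$. Your union bound bypasses this splitting entirely and yields the same $O(c_f^{-1})$ control more directly. Both routes are elementary; yours is shorter and avoids any bookkeeping of index subsets.
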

\begin{lemma}\label{lem:control:truncation}
Let $\tau = \log^2 N$. Then
\begin{align*}
\P_1\big(\max_{0 \leq k \leq N} X_k \leq \tau \big) \to 1.
\end{align*}
\end{lemma}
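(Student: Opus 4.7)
The plan is to prove this as a direct union bound exploiting the exponential tail of the Gamma law. Under $\P_1$ the construction of Section \ref{sec:lower-bound} sets $f_1\equiv 0$, so $X_k=\varepsilon_k$ are i.i.d.\ $\Gamma(\a,\b)$ random variables and the task reduces to a classical extreme-value estimate for i.i.d.\ samples with exponentially light tails.

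First I would record a tail bound of the form $\P_1(\varepsilon_1>t)\lesssim t^{\a-1}e^{-\b t}$ for $t$ large, which follows from a single integration by parts of the density \eqref{eq:lower-bound-eps-density} (the standard asymptotic for the upper incomplete Gamma function). Then, with $\tau=\log^2 N$, the union bound gives
\[
\P_1\Big(\max_{0\leq k\leq N}X_k>\log^2 N\Big)\leq (N+1)\,\P_1(\varepsilon_1>\log^2 N)\lesssim N\,(\log N)^{2(\a-1)}\,e^{-\b\log^2 N}.
\]
Since $e^{-\b\log^2 N}=N^{-\b\log N}$ decays faster than any power of $N$, this quantity tends to $0$ as $N\to\infty$, establishing the lemma. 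The index $k=0$ is absorbed into the $(N+1)$ factor, because the process is defined on $\mathbb{Z}$ and under $\P_1$ the variable $X_0$ is also $\Gamma(\a,\b)$-distributed.

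There is no real obstacle in the proof itself: it is the easy, purely tail-based ingredient of the lower-bound machinery. The only bookkeeping worth flagging is compatibility with Proposition \ref{prop:lower-bound-moments-U-1}, which requires $2f\tau\leq 1$ when the truncation at level $\tau$ is combined with $f=(c_f n^{\ast})^{-1/\a}$; because $f\tau\sim(\log N)^{2}\,N^{-\beta/(\a\beta+1)}\to 0$, this condition is automatic for $N$ large, which is precisely why $\log^2 N$ is a comfortable choice of truncation level and also why one could afford a somewhat smaller truncation if it ever became convenient in other parts of the argument.
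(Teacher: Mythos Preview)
Your proof is correct and follows the same outline as the paper: a union bound over $k$ combined with an exponential tail estimate for the $\Gamma(\a,\b)$ law. The only difference is cosmetic --- you obtain the tail bound by integrating the density (the incomplete Gamma asymptotic $\P_1(\varepsilon_1>t)\lesssim t^{\a-1}e^{-\b t}$), whereas the paper uses the moment generating function $M_{X_k}(t)=(1-t/\b)^{-\a}$ together with Markov's inequality; both routes give super-polynomial decay against the factor $N$ and yield the claim immediately.
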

Both Lemma \ref{lem:lower-bound-product-of-Einsen} and Lemma \ref{lem:control:truncation} are proven in the Supplement. In the sequel, it will also be convenient to use
\begin{equation}\label{eq:lower-bound-Vk}
V_k=U_k-\E_{\P_1} U_k, \quad 1 \leq k \leq n^{\ast}.
\end{equation}
Observe that by Proposition \ref{prop:lower-bound-moments-U-1}, we have
\begin{align}\label{eq:lower_bound:Vk:second_moment}
\big\|V_k\big\|_2 \lesssim f^{\a/2} + f^{\a} \lesssim f^{\a/2}.
\end{align}
Let $\tau = \log^2 N$. Due to \eqref{eq:lower-bound:truncation} and \eqref{eq:lower-bound-RD-derivative-random-variables}, it suffices to show
\begin{align*}
&{\bf (i):}\,\, \sum_{k = 1}^{n^{\ast}} U_k \xrightarrow{\P_1} 0,\\
&{\bf (ii):}\,\, \prod_{k=1}^{n^{\ast}}\mathds{1}_{\{X_k>fX_{k-1},X_{k-1} \leq \tau \}} \xrightarrow{\P_1} 1,
\end{align*}
as $c_f \to \infty$. We first show ${\bf (i)}$. By Proposition \ref{prop:lower-bound-moments-U-1} (i), we have
\begin{align*}
\sum_{k = 1}^{n^{\ast}} \big|\E_{\P_1} U_k \big| \lesssim n^{\ast} f^{\a} \lesssim c_f^{-1}.
\end{align*}
Next, let $\mathcal{E} = 2\N \cap \{1,2,\ldots,n^{\ast}\}$ and  $\mathcal{O} = 2\N-1 \cap \{1,2,\ldots,n^{\ast}\}$ denote the even and odd subsets. Then by the triangle inequality,
independence and \eqref{eq:lower_bound:Vk:second_moment}, we have
\begin{align*}
\Big\|\sum_{k = 1}^{n^{\ast}} V_k \Big\|_2 \leq \Big\|\sum_{k \in \mathcal{E}} V_k \Big\|_2 + \Big\|\sum_{k \in \mathcal{O}} V_k \Big\|_2 \lesssim \sqrt{n^{\ast}} f^{\a/2} \lesssim c_f^{-1/2}.
\end{align*}
It remains to show {\bf (ii)}. This, however, is an immediate consequence of Lemma \ref{lem:lower-bound-product-of-Einsen} and Lemma \ref{lem:control:truncation}.
\end{proof}

\section{Acknowledgments}

We would like to thank Thomas Mikosch for his valuable comments regarding literature in extreme value theory.
This research was supported by the research grant ANR-19-CE40-0013.


\begin{thebibliography}{10}

\bibitem{aigner-lovell-schmidt1977}
D.~Aigner, C.~A.~K. Lovell, and P.~Schmidt.
\newblock Formulation and estimation of stochastic frontier production function
  models.
\newblock {\em Journal of Econometrics}, 6(1):21–37, 1977.

\bibitem{balakrishna:book:2021}
N.~Balakrishna.
\newblock {\em Non-{G}aussian autoregressive-type time series}.
\newblock Springer, Singapore, [2021] \copyright 2021.

\bibitem{barndorf:Jrssb:2001}
Ole~E. Barndorff-Nielsen and Neil Shephard.
\newblock Non-gaussian ornstein–uhlenbeck-based models and some of their uses
  in financial economics.
\newblock {\em Journal of the Royal Statistical Society: Series B (Statistical
  Methodology)}, 63(2):167--241, 2001.

\bibitem{bibinger-jirak-reiss2014}
M.~Bibinger, M.~Jirak, and M.~Reiss.
\newblock Volatility estimation under one-sided errors with applications to
  limit order books.
\newblock {\em Ann. Appl. Probab.}, 26(5):2754--2790, 2016.

\bibitem{BONDON:jmva:2009}
P.~Bondon.
\newblock Estimation of autoregressive models with epsilon-skew-normal
  innovations.
\newblock {\em Journal of Multivariate Analysis}, 100(8):1761--1776, 2009.

\bibitem{brockwell_davis_2016}
P.J. Brockwell and R.A. Davis.
\newblock {\em Introduction to time series and forecasting}.
\newblock Springer Texts in Statistics. Springer, [Cham], third edition, 2016.

\bibitem{dahlhaus2012}
R.~Dahlhaus.
\newblock Locally stationary processes.
\newblock In {\em Handbook of Statistics}, volume~30, page 351. Elsevier B.V.,
  2012.

\bibitem{dahlhaus_richter_wu_2019}
R.~Dahlhaus, S.~Richter, and W.B. Wu.
\newblock {Towards a general theory for nonlinear locally stationary
  processes}.
\newblock {\em Bernoulli}, 25(2):1013 -- 1044, 2019.

\bibitem{daouia_2021}
A.~Daouia, J.-P. Florens, and L.~Simar.
\newblock Robustified expected maximum production frontiers.
\newblock {\em Econometric Theory}, 37(2):346--387, 2021.

\bibitem{davis-knight-liu1992}
R.~A. Davis, K.~Knight, and J.~Liu.
\newblock M-estimation for autoregressions with infinite variance.
\newblock {\em Stochastic Processes and their Applications}, 40:145–180,
  1992.

\bibitem{davis-mccormick1989}
R.~A. Davis and W.~P. McCormick.
\newblock Estimation for first-order autoregressive processes with positive or
  bounded innovations.
\newblock {\em Stochastic Processes and their Applications}, 31(2):237–250,
  1989.

\bibitem{delleur1984}
J.~W. Delleur, V.~K. Gupta, R.~N. Bhattacharya, V.~Klemeš, R.~L. Smith,
  P.~Todorovic, S.~J. Deutsch, J.~A. Ramos, V.~T.~V. Nguyen, D.~K. Pickard,
  E.~M. Tory, J.~A. Smith, A.~F. Karr, and S.~W. Woolford.
\newblock Stochastic hydrology.
\newblock {\em Advances in Applied Probability}, 16(1):17–23, 1984.
\newblock DOI: 10.2307/1427220.

\bibitem{diaz-hughes-swetnam2010}
H.~F. Diaz, M.~K. Hughes, and T.~W. Swetnam.
\newblock {\em Dendroclimatology: progress and prospects}.
\newblock Springer Netherlands, 2010.

\bibitem{durbin}
J.~Durbin.
\newblock Efficient estimation of parameters in moving-average models.
\newblock {\em Biometrika}, 46(3/4):306--316, 1959.

\bibitem{eichler_2011}
M.~Eichler, G.~Motta, and R.~von Sachs.
\newblock Fitting dynamic factor models to non-stationary time series.
\newblock {\em J. Econometrics}, 163(1):51--70, 2011.

\bibitem{preve12030139}
Anders Eriksson, Daniel P.~A. Preve, and Jun Yu.
\newblock Forecasting realized volatility using a nonnegative semiparametric
  model.
\newblock {\em Journal of Risk and Financial Management}, 12(3), 2019.

\bibitem{farell1957}
M.~J. Farrell.
\newblock The measurement of productive efficiency.
\newblock {\em Journal of the Royal Statistical Society. Series A (General)},
  120(3):253–290, 1957.

\bibitem{feigin:1996:aaop}
Paul~D. Feigin, Marie~F. Kratz, and Sidney~I. Resnick.
\newblock {Parameter estimation for moving averages with positive innovations}.
\newblock {\em The Annals of Applied Probability}, 6(4):1157 -- 1190, 1996.

\bibitem{MR1288286}
Paul~D. Feigin and Sidney~I. Resnick.
\newblock Limit distributions for linear programming time series estimators.
\newblock {\em Stochastic Process. Appl.}, 51(1):135--165, 1994.

\bibitem{gaver-lewis1980}
D.~P. Gaver and P.~A.~W. Lewis.
\newblock First-order autoregressive gamma sequences and point processes.
\newblock {\em Advances in Applied Probability}, 12(3):727–745, 1980.

\bibitem{hall_2009}
P.~Hall and I.~Van Keilegom.
\newblock Nonparametric ``regression'' when errors are positioned at
  end-points.
\newblock {\em Bernoulli}, 15(3):614--633, 2009.

\bibitem{ing:jtsa:2018}
Wei-Cheng Hsiao, Hao-Yun Huang, and Ching-Kang Ing.
\newblock Interval estimation for a first-order positive autoregressive
  process.
\newblock {\em Journal of Time Series Analysis}, 39(3):447--467, 2018.

\bibitem{ing:jasa:positive:auto:2012}
Ching-Kang Ing and Chiao-Yi Yang.
\newblock Predictor selection for positive autoregressive processes.
\newblock {\em Journal of the American Statistical Association},
  109(505):243--253, 2014.

\bibitem{jirak-meister-reiss2014}
M.~Jirak, A.~Meister, and M.~Reiß.
\newblock Adaptive function estimation in nonparametric regression with
  one-sided errors.
\newblock {\em Annals of Statistics}, 42(5):1970–2002, 2014.
\newblock DOI: 10.1214/14-AOS1248.

\bibitem{knight2001}
K.~Knight.
\newblock Limiting distributions of linear programming estimators.
\newblock {\em Extremes}, 4(2):87--103, 2001.

\bibitem{kreiss_boot_2015}
J.-P. Kreiss and E.~Paparoditis.
\newblock Bootstrapping locally stationary processes.
\newblock {\em Journal of the Royal Statistical Society. Series B (Statistical
  Methodology)}, 77(1):267--290, 2015.

\bibitem{kreiss:aos:1987}
J.P. Kreiss.
\newblock {On Adaptive Estimation in Stationary ARMA Processes}.
\newblock {\em The Annals of Statistics}, 15(1):112 -- 133, 1987.

\bibitem{kumbhakar-park-simar-tsionas2007}
S.~C. Kumbhakar, B.~U. Park, L.~Simar, and E.~G. Tsionas.
\newblock Nonparametric stochastic frontiers: A local maximum likelihood
  approach.
\newblock {\em Journal of Econometrics}, 137(1):1–27, 2007.

\bibitem{lawrance-lewis1980}
A.~J. Lawrance and P.~A.~W. Lewis.
\newblock The exponential autoregressive-moving average {EARMA(p,q)} process.
\newblock {\em Journal of the Royal Statistical Society. Series B
  (Methodological)}, 42(2):150–161, 1980.

\bibitem{leadbetter1974}
M.~R. Leadbetter.
\newblock On extreme values in stationary sequences.
\newblock {\em Zeit\-schrift für Wahr\-schein\-lich\-keits\-theo\-rie und
  Ver\-wandte Ge\-bie\-te}, 28(4):289–303, 1974.
\newblock DOI: 10.1007/BF00532947.

\bibitem{MR691492}
M.~R. Leadbetter, Georg Lindgren, and Holger Rootz\'{e}n.
\newblock {\em Extremes and related properties of random sequences and
  processes}.
\newblock Springer Series in Statistics. Springer-Verlag, New York-Berlin,
  1983.

\bibitem{meeusen-vandenbroeck1977}
W.~Meeusen and J.~van~den Broeck.
\newblock Efficiency estimation from {C}obb-{D}ouglas production functions with
  composed error.
\newblock {\em International Eco\-no\-mic Review}, 18(2):435–444, 1977.

\bibitem{nielsen-shephard1999}
B.~Nielsen and N.~Shephard.
\newblock Likelihood analysis of a first-order autoregressive model with
  exponential innovations.
\newblock {\em Journal of time series analysis}, 24(3):337–344, 2003.

\bibitem{park-sickles-simar1998}
B.~U. Park, R.~C. Sickles, and L.~Simar.
\newblock Stochastic panel frontiers: {A} semiparametric approach.
\newblock {\em Journal of Econometrics}, 84:273–301, 1998.

\bibitem{park-sickles-simar2007}
B.~U. Park, R.~C. Sickles, and L.~Simar.
\newblock Semiparametric efficient estimation of dynamic panel data models.
\newblock {\em Journal of Econometrics}, 136(1):281–301, 2007.

\bibitem{PREVE2015S225}
D.l Preve.
\newblock Linear programming-based estimators in nonnegative autoregression.
\newblock {\em Journal of Banking \& Finance}, 61:S225--S234, 2015.
\newblock Recent Developments in Financial Econometrics and Applications.

\bibitem{richter_dahlhaus_2019}
S.~Richter and R.~Dahlhaus.
\newblock {Cross validation for locally stationary processes}.
\newblock {\em The Annals of Statistics}, 47(4):2145 -- 2173, 2019.

\bibitem{rosenblatt:book:2000}
M.~Rosenblatt.
\newblock {\em Gaussian and non-{G}aussian linear time series and random
  fields}.
\newblock Springer Series in Statistics. Springer-Verlag, New York, 2000.

\bibitem{selk2021multivariate}
L.~Selk, C.~Tillier, and O.~Marigliano.
\newblock Multivariate boundary regression models, 2021.

\bibitem{smith_1994}
R.~L. Smith.
\newblock Nonregular regression.
\newblock {\em Biometrika}, 81(1):173--183, 1994.

\bibitem{tsybakov2009}
A.~B. Tsybakov.
\newblock {\em Introduction to Nonparametric Estimation}.
\newblock Springer, 2009.

\bibitem{Vogt_aos_2012}
M.~Vogt.
\newblock Nonparametric regression for locally stationary time series.
\newblock {\em Ann. Statist.}, 40(5):2601--2633, 2012.

\bibitem{basawa:jtsa:2005}
J.~Zhou and I.~V. Basawa.
\newblock Maximum likelihood estimation for a first-order bifurcating
  autoregressive process with exponential errors.
\newblock {\em Journal of Time Series Analysis}, 26(6):825--842, 2005.

\end{thebibliography}

\newpage
\section{Supplement}\label{sec:supplement}

\subsection{Additional results for the upper bounds}

Throughout this section, we set $\P = \P_f$.

\begin{proof}[Proof of Lemma \ref{lem:Norm-X2}]
Since $f((k-l)/n)\le\rho$ results in $f_{k,i}=\prod_{l=0}^{i-1}f\left(\frac{k-l}n\right)\le\rho^i$, we thus deduce
\begin{align*}
\left\|X_k^{(2)}\right\|_1
&\le\sum_{i>n^\gamma}f_{k,i}\left\|\varepsilon_{k-i}\right\|_1
\lesssim\sum_{i>n^\gamma}\rho^i=\frac{\rho^{n^\gamma+1}}{1-\rho}
\lesssim\rho^{\left(n^\gamma\right)},
\end{align*}
and similarly $\|X_k\|_p \lesssim 1$.
\end{proof}

\begin{proof}[Proof of Lemma \ref{lem:Leadbetters-D-c}]
Due to $C_{\eta_n}\subseteq C_{\eta_n}^{(1)}$,
\begin{eqnarray*}
\left|\P\left(C_{\eta_n,n}^{(1)}\right)-\P\left(C_{\eta_n,n}\right)\right|&=&\P\left(\bigcap_{k=1}^n\left\{\varepsilon_k\ge uX_{k-1}^{(1)}\right\}\backslash\bigcap_{k=1}^n\left\{\varepsilon_k\ge uX_{k-1}\right\}\cap A_{\eta_n}\right).
\end{eqnarray*}
Using
\begin{eqnarray*}
\bigcap_{k=1}^n\left\{\varepsilon_k\ge uX_{k-1}^{(1)}\right\}\backslash\bigcap_{k=1}^n\left\{\varepsilon_k
\ge uX_{k-1}\right\}
&\!\subseteq\!&\bigcup_{k=1}^n\left\{\varepsilon_k\ge uX_{k-1}^{(1)}\right\}\backslash\left\{\varepsilon_k
\ge uX_{k-1}\right\},
\end{eqnarray*}
we can conclude
\begin{align}
\left|\P\left(C_{\eta_n,n}^{(1)}\right)-\P\left(C_{\eta_n,n}\right)\right|
&\le\sum_{k=1}^n\P\left(\left\{\varepsilon_k\ge uX_{k-1}^{(1)}\right\}\backslash\left\{\varepsilon_k\ge
uX_{k-1}\right\}\cap A_{\eta_n}\right).\label{eq:sum-for-each-summand}
\end{align}
For each summand holds
\begin{eqnarray}
&&\P\left(\left\{\varepsilon_k\ge uX_{k-1}^{(1)}\right\}\backslash\left\{\varepsilon_k\ge uX_{k-1}\right\}
\cap A_{\eta_n}\right)\nonumber\\
&=&\P\left(\left\{\u X_{k-1}^{(1)}\le\varepsilon_k<\u X_{k-1}^{(1)}+\u X_{k-1}^{(2)}\right\}\cap
A_{\eta_n}\right)\label{eq:for-each-summand-for-later}\\
&\le&\P\left(\u X_{k-1}^{(1)}\le\varepsilon_k<\u X_{k-1}^{(1)}+\u\eta_n\right)\nonumber,
\end{eqnarray}
which can be written as
\begin{eqnarray*}
&&\int_0^\infty\P\left(\u x\le\varepsilon_k<\u x+\u\eta_n|X_{k-1}^{(1)}=x\right)dF_{X_{k-1}^{(1)}}
(x)\\
&=&\int_0^\infty\P\left(\u x\le\varepsilon_k<\u x+\u\eta_n\right)dF_{X_{k-1}^{(1)}}(x)\\
&=&\int_0^\infty F_\varepsilon\left(\u x+\u\eta_n\right)-F_\varepsilon\left(\u x\right)dF_{X_{k-1}^{(1)}}(x),
\end{eqnarray*}
where we have used that $X_{k-1}^{(1)}=\sum_{i=0}^{n^\gamma}f_{k-1,i}\;\varepsilon_{k-1-i}$ is independent of $\varepsilon_k$.
Using that $F_{\varepsilon} \in \mathcal{H}(L_{\varepsilon},\beta_{\varepsilon})$ by Assumption \ref{ass:main} {\bf (i)}, this is further bounded
by $L_{\varepsilon} (u \eta_n)^{\beta_{\varepsilon}}$.

For each summand in \eqref{eq:sum-for-each-summand}, we thus have
\begin{equation*}
\P\left(\left\{\varepsilon_k\ge uX_{k-1}^{(1)}\right\}\backslash\left\{\varepsilon_k\ge uX_{k-1}\right\}\cap A_{\eta_n}\right)\lesssim (u \eta_n)^{\beta_{\varepsilon}}.
\end{equation*}
This finally yields
\begin{align*}
\left|\P\left(C_{\eta_n}\right)-\P\left(C_{\eta_n}^{(1)}\right)\right|\lesssim n (u \eta_n)^{\beta_{\varepsilon}}.
\end{align*}
\end{proof}


\begin{proof}[Proof of Lemma \ref{lem:neighboring}]
We first note that for all $j\ge1$,
\begin{eqnarray*}
&&\P\left(\varepsilon_k\le\u X_{k-1},\varepsilon_{k+j}\le\u X_{k+j-1}\right)\\
&=&\P\Big(\varepsilon_k\le\u X_{k-1},\varepsilon_{k+j}\le\u\sum_{i=1}^jf_{k+j-1,i-1}\;\varepsilon_{k+j-i}+\u f_{k+j-1,j}\;X_{k-1}\Big)\\
&=&\P\Big(\varepsilon_k\le\u X_{k-1},\varepsilon_{k+j}\le\u\sum_{i=1}^j\tilde f_i\;\varepsilon_{k+j-i}+\u\tilde f_{j+1}X_{k-1}\Big),
\end{eqnarray*}
where $\tilde f_i:=f_{k+j-1,i-1}$ with $f_{k,i}$ defined as in \eqref{eq:f_(k,i)-Def}. We can further write this as
\begin{eqnarray}
&&\P\Big(\varepsilon_k\le\u X_{k-1},\varepsilon_{k+j}\le\u\sum_{i=1}^{j-1}\tilde f_i\;\varepsilon_{k+j-i}+\u\tilde f_j\;\varepsilon_k+\u\tilde f_{j+1}X_{k-1}\Big)\nonumber\\
&\le&\P\Big(\varepsilon_k\le\u X_{k-1},\varepsilon_{k+j}\le\u X_{k+j-1}'+\u^2\tilde f_jX_{k-1}+\u\tilde f_{j+1}X_{k-1}\Big),\label{eq:indep-copy}
\end{eqnarray}
where $X_{k+j-1}'$ is an independent copy of $X_{k+j-1}$ in the sense that $X_{k+j-1}'\overset d=X_{k+j-1}$ and $X_{k+j-1}'$ is independent of $\{\varepsilon_i\}_{i\le k}$ and $\{\varepsilon_i\}_{i\ge k+j}$. Using that $X_{k-1}$ is independent of $\varepsilon_k$, $\varepsilon_{k+j}$ and $X_{k+j-1}'$, \eqref{eq:indep-copy} can, analogously to the proof of Proposition \ref{thm:err-distrib}, be written as
\begin{align}
&\int_0^\infty\P\Big(\varepsilon_k\le\u x\Big)\P\Big(\varepsilon_{k+j}\le\u X_{k+j-1}'+\u^2\tilde f_jx+\u\tilde f_{j+1}\;x\Big)dF_{X_{k-1}}(x)\nonumber\\
=&\int_0^{x_n}\P\Big(\varepsilon_k\le\u x\Big)\P\Big(\varepsilon_{k+j}\le\u X_{k+j-1}'+\u^2\tilde f_jx+\u\tilde f_{j+1}\;x\Big)dF_{X_{k-1}}(x)+\nonumber\\
&\int_{x_n}^\infty\P\Big(\varepsilon_k\le\u x\Big)\P\Big(\varepsilon_{k+j}\le\u X_{k+j-1}'+\u^2\tilde f_jx+\u\tilde f_{j+1}\;x\Big)dF_{X_{k-1}}(x)\nonumber\\
=:&I_n+II_n\label{eq:Doppelintegral}
\end{align}
for $x_n=u^{\xi-1}$, $\xi\in(0,1)$, with $x_n\to\infty$ and $\u x_n\to0$ as $n\to\infty$. First, we deal with $I_n$. Since $X_{k+j-1}'$ is independent of $\varepsilon_{k+j}$, we obtain
\begin{eqnarray*}
&&\P\Big(\varepsilon_{k+j}\le\u X_{k+j-1}'+\u^2\tilde f_jx+\u\tilde f_{j+1}\;x\Big)\\
&=&\int_0^\infty\P\Big(\varepsilon_{k+j}\le\u z+\u^2\tilde f_jx+\u\tilde f_{j+1}\;x\Big)dF_{X_{k+j-1}}(z).
\end{eqnarray*}
With $z_n=\u^{\zeta-1}$, $\zeta\in(0,1)$, such that $z_n\to\infty$ and $\u z_n\downarrow0$ as $n\to\infty$, for $x\in(0,x_n)$, this is bounded by
\begin{eqnarray}
&&\int_0^{z_n}\c\left(\u z+\u^2\tilde f_jx+\u\tilde f_{j+1}\;x\right)^\a+\nonumber\\
&&\hspace{20mm}\mathcal{O}\left(\left(\u z+\u^2\tilde f_jx+\u\tilde f_{j+1}\;x\right)^{\a+\delta}\right)dF_{X_{k+j-1}}(z)+\nonumber\\
&&\int_{z_n}^\infty\P\Big(\varepsilon_{k+j}\le\u z+\u^2\tilde f_jx+\u\tilde f_{j+1}\;x\Big)dF_{X_{k+j-1}}(z)\nonumber\\
&=:&\tilde I_n+\tilde{II}_n,\label{eq:lemma-for-Condition-D'-tildeI(I)n}
\end{eqnarray}
where we also used Assumption \ref{ass:main}. For $\tilde I_n$, we have
\begin{align*}
\tilde I_n\le&\c\u^\a\E\left[\left(X_{k+j-1}'+\u\tilde f_jx+\tilde f_{j+1}\;x\right)^\a\right]+\\
&\mathcal{O}\left(\u^{\a+\delta}\right)\E\left[\left(X_{k+j-1}'+\u\tilde f_jx+\tilde f_{j+1}\;x\right)^{\a+\delta}\right].
\end{align*}
Concerning $\tilde{II}_n$, by Markov's inequality and Assumption \ref{ass:main} {\bf (ii)}, there exists $\tilde q(1-\zeta)\ge\a+\tilde\delta$, $p>\tilde{q}$, $\tilde{\delta} > 0$, such that
$$\tilde{II}_n\le\int_{z_n}^\infty dF_{X_{k+j-1}}(z)=\P(X_{k+j-1}'\ge z_n)\le\left\|X_{k+j-1}'\right\|_{\tilde q}^{\tilde q}z_n^{-\tilde q}=\mathcal{O}\left(\u^{\a+\tilde\delta}\right),$$
selecting $\zeta > 0$ sufficiently small. Hence, the sum in \eqref{eq:lemma-for-Condition-D'-tildeI(I)n} is bounded by
\begin{eqnarray*}
\tilde I_n+\tilde{II}_n&\le&\c\u^\a\E\left[\left(X_{k+j-1}'+\u\tilde f_jx+\tilde f_{j+1}\;x\right)^\a\right]+\\
&&\mathcal{O}\left(\u^{\a+\delta}\right)\E\left[\left(X_{k+j-1}'+\u\tilde f_jx+\tilde f_{j+1}\;x\right)^{\a+\delta}\right]+\mathcal{O}\left(\u^{\a+\tilde\delta}\right).
\end{eqnarray*}
For $I_n$ from \eqref{eq:Doppelintegral}, we thus obtain by Cauchy-Schwarz
\begin{eqnarray*}
I_n&\le&\int_0^{x_n}\P\Big(\varepsilon_k\le\u x\Big)\Bigg(\c\u^\a\E\left[\left(X_{k+j-1}'+\u\tilde f_jx+\tilde f_{j+1}\;x\right)^\a\right]+\\
&&\mathcal{O}\left(\u^{\a+\delta}\right)\E\left[\left(X_{k+j-1}'+\u\tilde f_jx+\tilde f_{j+1}\;x\right)^{\a+\delta}\right]+\mathcal{O}\left(\u^{\a+\tilde\delta}\right)\Bigg)dF_{X_{k-1}}(x)\\
&\le&\Bigg(\c\u^\a\E^{\frac{1}{2}}\left[\left(X_{k+j-1}'+\u\tilde f_jX_{k-1}+\tilde f_{j+1}\;X_{k-1}\right)^{2\a}\right]+\\
&&\mathcal{O}\left(\u^{\a+\delta}\right)\E^{\frac{1}{2}}\left[\left(X_{k+j-1}'+\u\tilde f_jX_{k-1}+\tilde f_{j+1}\;X_{k-1}\right)^{2\a+2\delta}\right]+\mathcal{O}\left(\u^{\a+\tilde\delta}\right)\Bigg)\\
&&\Big(\int_0^{x_n}\P^2\big(\varepsilon_k\le\u x\big)dF_{X_{k-1}}(x) \Big)^{\frac{1}{2}} \\&\lesssim& \Big(u^{\a} + \mathcal{O}\big(u^{\a + \tilde{\delta}}\big) \Big) \Big(\int_0^{x_n}\P^2\big(\varepsilon_k\le\u x\big)dF_{X_{k-1}}(x) \Big)^{\frac{1}{2}}.
\end{eqnarray*}
Now, using Assumption \ref{ass:main}, we have (with $\tilde{\delta}>0$ sufficiently small such that $p \geq 2\a + 2\tilde{\delta}$)
\begin{align*}
\int_0^{x_n}\P^2\big(\varepsilon_k\le\u x\big)dF_{X_{k-1}}(x) \lesssim u^{2\a} \E X_{k-1}^{2\a} + u^{2\a + 2\tilde{\delta}} \E X_{k-1}^{2\a + 2 \tilde{\delta}},
\end{align*}
and hence $I_n \lesssim u^{2\a}$. For $II_n$ in \eqref{eq:Doppelintegral}, selecting $\xi>0$ sufficiently small, Markov's inequality yields
\begin{align}
II_n\le&\int_{x_n}^\infty\,dF_{X_{k-1}}(x)
=\P\left(X_{k-1}\ge x_n\right)
\le\left\|X_{k-1}\right\|_p^p x_n^{-p}
\lesssim\u^{2\a},\label{eq:lemma-for-Condition-D'-II_n}
\end{align}
using Assumption \ref{ass:main} {\bf (ii)}. This completes the proof.
\end{proof}

\subsection{Additional results for the lower bounds}

\begin{proof}[Proof of Lemma \ref{lem:lower-bound-moments-U-2}]
As will be obvious from the proof, we may set $\tau = \infty$, hence $\mathds{1}_{\{X_{k-1} \leq \tau\}} = 1$. Moreover, since the case $\a = 1$ is trivial, we will assume $\a > 1$ below. Recall that $X_k,X_{k-1}$ are independent subject to $\P_1$. Set $X = X_{k-1}$. Then
\begin{align}
&\E_{\P_1}\frac{X_{k-1}}{X_k}\mathds{1}_{\{X_k>fX_{k-1}\}}\nonumber\\
=&\E_{\P_1}\left[X\frac{\b^\a}{\Gamma(\a)}\int_0^\infty t^{\a-2}e^{-\b t}\d t\right]-
\E_{\P_1}\left[X\frac{\b^\a}{\Gamma(\a)}\int_0^{fX}t^{\a-2}e^{-\b t}\d t\right].\label{eq:lower-bound-lemma-for-Erwartungswert}
\end{align}
Since $\Gamma(\a)=(\a-1)\Gamma(\a-1)$ for $\a>1$, we obtain for the first summand
\begin{align}
(\a-1)f\E_{\P_1}\left[X\frac{\b^\a}{\Gamma(\a)}\int_0^\infty t^{\a-2}e^{-\b t}\d t\right]
=&\b f\E_{\P_1}\left[X\int_0^\infty f_{\Gamma(\a-1,\b)}(t)\d t\right]\nonumber\\
=&\b f\E_{\P_1}\left[X_{k-1}\right],\label{eq:lower-bound-lemma-for-Erwartungswert-1}
\end{align}
where $f_{\Gamma(\a-1,\b)}$ denotes the density of a $\Gamma(\a-1,\b)$-distributed random variable. For the second summand, we have
\begin{align}
(\a-1)f\E_{\P_1}\left[X\frac{\b^\a}{\Gamma(\a)}\int_0^{fX}t^{\a-2}e^{-\b t}\d t\right]
\lesssim&f\E_{\P_1}\left[X\int_0^{fX_{}}t^{\a-2}\d t\right]\nonumber\\
=&f^{\a}\frac{\E_{\P_1} X_{}^\a}{\a-1}
\lesssim f^\a.\label{eq:lower-bound-lemma-for-Erwartungswert-2}
\end{align}
Combining \eqref{eq:lower-bound-lemma-for-Erwartungswert-1} and \eqref{eq:lower-bound-lemma-for-Erwartungswert-2} with \eqref{eq:lower-bound-lemma-for-Erwartungswert} gives the result.
\end{proof}

\begin{proof}[Proof of Lemma \ref{lem:lower-bound-moments-U-1}]
As in the proof of Lemma \ref{lem:lower-bound-moments-U-2} we use an independent copy $X$ of $X_{k-1}$, yielding
\begin{eqnarray}
& \left\|f\frac{X_{k-1}}{X_k}\mathds{1}_{\{X_k>fX_{k-1}\}} \mathds{1}_{\{X_{k-1} \leq \tau\}}\right\|_p^p \leq \E_{\P_1}\left(fX\right)^p\int_{fX}^1t^{\a-1-p}e^{-bt}\d t\nonumber\\
&+\E_{\P_1}\left(fX\right)^p\int_1^\infty t^{\a-1-p}e^{-bt}\d t \mathds{1}_{\{X \leq \tau\}}.\label{eq:lower-bound-moments-U}
\end{eqnarray}
For the first integral in \eqref{eq:lower-bound-moments-U}, we get
\begin{align*}
\int_{fX}^1t^{\a-1-p}e^{-bt}\d t \le&\frac{\left(fX\right)^{\a-p}-1}{p-\a}.
\end{align*}

Since $\E_{\P_1}X^\a <\infty$, we thus obtain for the first expectation in \eqref{eq:lower-bound-moments-U}
\begin{align}
\E_{\P_1} \left(fX\right)^p\int_{fX}^1t^{\a-1-p}e^{-bt}\d t
&\le\E_{\P_1} \left(fX\right)^p\frac{\left(fX\right)^{\a-p}-1}{p-\a}
\lesssim\frac{f^\a}{p-\a}.\label{eq:lower-bound-lemma-for-exp-u-1}
\end{align}
We now turn to the second expectation in \eqref{eq:lower-bound-moments-U}. Since
\begin{align*}
\int_1^\infty t^{\a-1-p}e^{-bt}\d t&\le \Gamma(\mathfrak{a}) \int_1^\infty f_\varepsilon(t)\d t\le \Gamma(\mathfrak{a}),
\end{align*}
we obtain
\begin{align}
\E_{\P_1}\left(fX\right)^p \mathds{1}_{\{X \leq \tau\}}\int_1^\infty t^{\a-1-p}e^{-bt}\d t &\lesssim f^p \tau^{(p-\mathfrak{a})\vee 0} \E X^{\mathfrak{a}} \lesssim f^p \tau^{(p-\mathfrak{a})\vee 0}.\label{eq:lower-bound-lemma-for-exp-u-2}
\end{align}
Combining \eqref{eq:lower-bound-lemma-for-exp-u-1} and \eqref{eq:lower-bound-lemma-for-exp-u-2} with \eqref{eq:lower-bound-moments-U}, we obtain
\begin{align*}
\E_{\P_1}\left(f\frac{X_{k-1}}{X_k}\right)^p\mathds{1}_{\{X_k>fX_{k-1}\}} \mathds{1}_{\{X_{k-1} \leq \tau\}} \lesssim\frac{f^\a}{p-\a}+f^p \tau^{(p-\mathfrak{a})\vee 0}.
\end{align*}
\end{proof}

\begin{proof}[Proof of Lemma \ref{lem:lower-bound-product-of-Einsen}]
If a sequence $X_n$ of random variables only takes values in the set $\{0,1\}$, it suffices to show that
$\E X_n \to 0$ to establish $X_n \xrightarrow{\P} 0$. To this end, we assume w.l.o.g. that $n^{\ast}$ is even, and consider $\mathcal{I},\mathcal{J} \subset \{1,\ldots,n^{\ast}\}$ such that
\begin{align*}
\mathcal{I} = \{2,6,10,\ldots\}, \quad \mathcal{J} = \{4,8,12,\ldots\}.
\end{align*}
Then, by independence
\begin{align*}
\E_{\P_1} \prod_{k \in \mathcal{I}} \mathds{1}_{\{X_k > f X_{k-1}\}} = \prod_{k \in \mathcal{I}} \E_{\P_1} \mathds{1}_{\{X_k > f X_{k-1}\}}.
\end{align*}
 Next, again by independence, we have
\begin{align*}
\Gamma(\a)\E_{\P_1} \mathds{1}_{\{X_k \leq f X_{k-1}\}} \leq \E_{\P_1} \int_0^{f X_{k-1}} x^{\a - 1} dx \lesssim f^{\a}.
\end{align*}
Together with the above, this implies
\begin{align*}
\E_{\P_1} \prod_{k \in \mathcal{I}} \mathds{1}_{\{X_k > f X_{k-1}\}}\geq \big(1 - O(f^{\a})\big)^{n^{\ast}/2} = e^{O(c_f^{-1})} = 1 + o\big(1\big),
\end{align*}
as $c_f \to \infty$, and hence
\begin{align}
\prod_{k \in \mathcal{I}} \mathds{1}_{\{X_k > f X_{k-1}\}} \xrightarrow{\P_1} 1.
\end{align}
We may argue analogously for $\mathcal{J}$. Since clearly $\mathds{1}_{\{X_1 > f X_{0}\}} \xrightarrow{\P_1} 1$, the claim follows.
\end{proof}

\begin{proof}[Proof of Lemma \ref{lem:control:truncation}]
Since by Markov's inequality and $t > 0$
\begin{align*}
\P\big(\max_{0 \leq k \leq N} X_k \geq \tau_N^2 \big) \leq e^{-t \tau_N^2} (N+1) \E e^{tX_0},
\end{align*}
this follows from the fact that for $t < \b$, we have
\begin{align*}
M_{X_0}\big(t\big) = \E e^{tX_0} = \big(1 - t/\b\big)^{-\a}.
\end{align*}
\end{proof}

\end{document}